\documentclass[amscd,amssymb,12pt]{amsart}

\usepackage{graphicx}
\usepackage{tikz}
\usetikzlibrary{calc,arrows.meta,decorations.pathmorphing}
\usepackage[hidelinks]{hyperref}

\newtheorem*{Main}{Main Theorem}
\newtheorem{Thm}{Theorem}[section]
\newtheorem{Cor}[Thm]{Corollary}
\newtheorem{Lem}[Thm]{Lemma}
\newtheorem{Prop}[Thm]{Proposition}
\theoremstyle{definition}
\newtheorem{Def}[Thm]{Definition}
\theoremstyle{remark}
\newtheorem{Rem}[Thm]{Remark}

\DeclareMathOperator{\diam}{diam}

\DeclareMathOperator{\length}{length}

\begin{document}

\title{Additive quasi-isometries and cacti}
\author{Panos Papasoglu}
\address{Mathematical Institute, University of Oxford, Woodstock Road, Oxford OX2 6GG, U.K.}
\email{papazoglou@maths.ox.ac.uk}
\author{Eric Swenson}
\address{Department of Mathematics, Brigham Young University, 275 TMCB, Provo, UT 84602, U.S.A.}
\email{eric@mathematics.byu.edu}

\begin{abstract}
We prove that if a geodesic metric space contains no $c$-fat theta curve for
some $c>0$, then it is $(1,K)$-quasi-isometric to a cactus graph, where $K$
depends only on $c$.  Using a  coarse characterization of cacti  in terms of $c$-fat theta curves this implies that every geodesic metric space
quasi-isometric to a cactus is $(1,K)$-quasi-isometric to a cactus graph.
\end{abstract}

\maketitle

\section{Introduction}

Throughout the paper a \emph{cactus} means a connected graph in which any two
simple cycles have at most one vertex in common; graphs are endowed with their
path metrics.  A map $F:X\to Y$ is a
$(1,K)$-quasi-isometry if
\[
        \bigl|d_Y(F(x),F(y))-d_X(x,y)\bigr|\leq K
\]
for all $x,y\in X$, and every point of $Y$ lies at distance at most $K$ from
$F(X)$.

Studying spaces up to quasi-isometry originated in geometric group theory but there has been a recent interest in the subject by both computer scientists and graph theorists as it gives a natural perspective when studying geodesic metric spaces such as networks or graphs.

Manning \cite{Man} characterized geodesic metric spaces quasi-isometric to trees by the bottleneck property.  In the graph case, Georgakopoulos--Papasoglu \cite{GP} reformulated this in fat-minor language: a graph is quasi-isometric to a tree if and only if it excludes a sufficiently fat $K_3$-minor.  Kerr further proved that every geodesic metric space quasi-isometric
to a tree is $(1,K)$-quasi-isometric to a simplicial tree for some $K$
\cite[Theorem~1.3]{Kerr}; for graphs see also Berger--Seymour \cite{BS}.  Thus a graph excluding a sufficiently fat $K_3$-minor is $(1,K)$-quasi-isometric to a tree.

There were earlier results: in the graph case,
Chepoi--Dragan--Newman--Rabinovich--Vax\`es proved that if a graph $G$ embeds
in a tree with multiplicative distortion $\lambda$, then there is a tree $T$
on the same vertex set such that
\[
 d_T(x,y)-2\leq d_G(x,y)\leq d_T(x,y)+3\lambda
 \qquad(x,y\in G)
\]
\cite[Corollary~4]{CDNRV}. It is not hard to see that their result implies Kerr's result-at least for graphs.

Chepoi et al. proved further that if a graph does not contain an $r$-metric relaxed $K_{2,3}$-minor model (see Section~\ref{S:prelim} for a definition), then it is quasi-isometric to a cactus.  Fujiwara--Papasoglu \cite{FPcactus} later proved that if a geodesic metric space does not contain a $c$-fat theta curve for some $c$, then it is quasi-isometric to a cactus.  These two obstructions are equivalent up to a change of constant; in Section~\ref{S:prelim} we record the direction needed for the proof.

In this paper we strengthen this result and we show that under this condition the space is in fact $(1,K)$-quasi-isometric to a cactus.

For $c>0$, a \emph{$c$-fat theta curve} consists of two non-empty connected
sets $A,B$ and three paths from $A$ to $B$ such that $d(A,B)>3c$ and the
portions of the three paths outside $N_c(A)\cup N_c(B)$ are pairwise at
distance greater than $c$.  Fujiwara--Papasoglu use a slightly different,
equivalent definition of a fat theta curve; we show the equivalence in Lemma~\ref{L:theta-comparison} below.

The question of when the multiplicative constant in a quasi-isometry can be
removed has recently been formulated systematically by Nguyen--Scott--Seymour
\cite{NSS}.  Given a class $\mathcal G$ of connected graphs, they ask when the
following holds: for every $L,C$ there is $C'$ such that, whenever a graph is
$(L,C)$-quasi-isometric to a member of $\mathcal G$, it is
$(1,C')$-quasi-isometric to a member of $\mathcal G$.  They conjecture this
for classes closed under edge contraction and subdivision.  They prove strong
forms of this statement for graphs of bounded path-width, and more generally
bounded line-width, and observe that it also holds for the class of connected
outerplanar graphs.  The corresponding question for planar graphs remains
open. It is easy to see that a quasi-isometry can not always be upgraded to a $(1,K)$-quasi-isometry and
Davies,  Hatzel and Hickingbotham, \cite{DHH} have some stronger negative results limiting the scope of \cite{NSS}.

Our result is also naturally viewed from the perspective of coarse
graph theory.  Georgakopoulos--Papasoglu \cite{GP} introduced fat minors as
large-scale analogues of graph minors and formulated general questions asking
whether exclusion of sufficiently fat minors forces
quasi-isometry to a graph with the corresponding minors
excluded.  This turned out to be false in general \cite{DHIM} but for cacti one obtains even a $(1,K)$-quasi-isometry.

\begin{Main}
Let $X$ be a geodesic metric space.  If $X$ contains no $c$-fat theta curve
for some $c>0$, then there are a constant $K=K(c)$ and a cactus $Q$ such that
$X$ and $Q$ are $(1,K)$-quasi-isometric.
\end{Main}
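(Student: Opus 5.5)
The plan is to imitate Kerr's strategy for trees: build the cactus from a coarse decomposition of $X$ into ``cycle pieces'' and ``tree pieces'', and make every step lose only additive constants. Fix a basepoint $o\in X$ and a scale $r=Mc$ with $M$ a large absolute constant.

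\emph{Step 1: additive tree structure from spheres.} Consider the annuli $A_n=\{x: nr\le d(o,x)<(n+1)r\}$ and the equivalence relation on $A_n$ generated by being joined by a path that stays outside the ball $B(o,nr-r)$. As in Manning's and Kerr's arguments, the classes form the vertices of a graph $\Gamma$, with edges between classes in consecutive annuli that are adjacent. If $\Gamma$ were a tree, the absence of fat bigons would give a $(1,K)$-map to a tree. Here $\Gamma$ need not be a tree, since $X$ is only cactus-like. So I use the no-$c$-fat-theta hypothesis, via the equivalence with relaxed $K_{2,3}$-minors recorded in Section~\ref{S:prelim} and Lemma~\ref{L:theta-comparison}, to show that the cycles of $\Gamma$ are coarsely edge-disjoint, and in fact meet in at most one class. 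I would rather not use $\Gamma$ directly, because the radial levelling distorts the lengths of cycles multiplicatively near their ``top'' and ``bottom''.

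\emph{Step 2: intrinsic cycle pieces.} Call a geodesic loop $\gamma$ in $X$ \emph{fat} if it is $(1,Cc)$-quasi-isometrically embedded for its intrinsic metric, i.e.\ $d_X$ restricted to $\gamma$ agrees with the circle metric up to $Cc$, and its length exceeds $Lc$. No fat theta means two such loops cannot share a long coarse arc. Otherwise two non-adjacent subarcs would give three fat paths between two sets, which is a $c$-fat theta. Hence the coarse overlaps of distinct fat loops have diameter bounded by $K_1(c)$. Choose a maximal family $\mathcal C$ of fat loops with pairwise bounded coarse overlap. Every ``essential'' cycle, meaning one not coarsely null-homotopic at scale $Lc$, must track some loop in $\mathcal C$ up to additive error. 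For this I use the Fujiwara--Papasoglu theorem, which already gives a cactus up to quasi-isometry, to guarantee that the cycle pieces exist and are isolated.

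\emph{Step 3: assemble the cactus and estimate distances.} Collapse each $N_{K_1}(\gamma)$ for $\gamma\in\mathcal C$ to a metric circle of length $\length(\gamma)$. On the complement, the space coarsely has no essential loops, so it has the bottleneck property with constant depending only on $c$. Kerr's theorem, applied to the quotient in which circles are treated as vertices, gives a simplicial tree structure at additive cost. Gluing the circles into this tree at their attachment points gives a cactus $Q$, which I subdivide so that it is a graph. The map $F:X\to Q$ sends a point to its nearest circle point or to its tree image.

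The verification that $|d_Q(F x,F y)-d(x,y)|\le K$ is the main obstacle. A geodesic from $x$ to $y$ passes through a chain of cycle pieces and tree pieces. The worry is that the errors add up along the chain, giving an error linear in the number of pieces rather than a constant. To avoid this, I would show that a geodesic enters and leaves each circle near the attachment points. The entry and exit errors should then telescope, because $Q$ is uniquely geodesic between attachment points up to the choice of arc on each circle. The choice of arc is correct, up to additive error, because the circles are $(1,Cc)$-embedded. The error inside each piece is absorbed by the bottleneck property: the geodesic must pass within $O(c)$ of each cut point of the chain. This gives an error per cut point that does not accumulate, since distances split additively at cut points up to $O(c)$ only once, and this splitting is sharp. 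The telescoping is exactly the step I expect to need the most care. Coarse surjectivity is immediate from the construction.
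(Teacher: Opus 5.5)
Your proposal has a genuine gap. The step that carries the whole theorem, the additive distance estimate in Step 3, is asserted but not proved. A geodesic from $x$ to $y$ may cross arbitrarily many tree pieces and circle pieces. At each crossing, the bottleneck property and the $(1,Cc)$-embedding of the circle give only an $O(c)$ error. Nothing in your construction stops these errors from adding up to something proportional to the number of crossings, and saying that ``the entry and exit errors telescope'' just restates what has to be shown.

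The appeal to Kerr has the same defect. Collapsing each $N_{K_1}(\gamma)$, a set of unbounded diameter, to a vertex does not have additive error. So a $(1,K)$-map from the collapsed space to a tree does not transfer back to $X$.

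Step 2 is also unsupported. Fujiwara--Papasoglu give only an $(L,C)$-quasi-isometry, so the cycles of their cactus pull back to quasi-circles with multiplicative constants. Producing $(1,Cc)$-embedded loops that track every essential cycle is itself an additive-upgrade statement of the same type as the Main Theorem. In addition, ``distinct fat loops have bounded coarse overlap'' is false as stated, since two loops differing only by a short detour are distinct.

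The missing idea is the one you discard in Step 1, and your reason for discarding it is mistaken. The layering graph is always a tree (Proposition~\ref{P:layering-tree}). It has no cycles to make edge-disjoint: for a large loop, the points of a given height on both sides lie in one cluster, so the loop is folded into a path of clusters. What actually goes wrong is that clusters can have unbounded diameter. The no-fat-theta hypothesis, through the exclusion of a $\lambda$-metric relaxed $K_{2,3}$, shows that each cluster is coarsely at most two points.

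The paper therefore works with unit-width layers in the graph $\Gamma(X_0)$ and splits only the big clusters (diameter $>B$) into their two cells. Propagation and non-branching organize these into finite chains. The quotient $Q$ is a cactus, and the projection $p$ is $1$-Lipschitz, preserves height exactly, and has fibres of diameter at most $B$. Near the top and bottom of a cycle the clusters are not big and are simply collapsed, at additive cost at most $B$; there is no multiplicative distortion. The lower bound $d_Q\le d_X$ then comes for free.

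For the upper bound, the paper never follows an $X$-geodesic through many pieces. A $Q$-geodesic splits into two radial segments, which are projections of root geodesics, and one arc of a single cycle. That arc consists of at most two height-monotone pieces, and these lift isometrically because cells have no cross adjacencies. So at most three fibre jumps are needed, giving error $3B$ however many pieces are involved. It is this global height function that prevents the error accumulation your Step 3 cannot control.
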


\begin{Cor}\label{C:quasi-cactus}
If a geodesic metric space $X$ is quasi-isometric to a cactus, then there are
a constant $K$ and a cactus $Q$ such that $X$ and $Q$ are
$(1,K)$-quasi-isometric.
\end{Cor}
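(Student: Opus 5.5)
The plan is to reduce Corollary~\ref{C:quasi-cactus} to the Main Theorem. The only thing to check is that a geodesic space quasi-isometric to a cactus contains no $c$-fat theta curve once $c$ is large enough, where $c$ depends on the quasi-isometry constants. After that the Main Theorem gives the $(1,K)$-quasi-isometry with $K=K(c)$ directly.

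Let $f:X\to Q_0$ be an $(L,C)$-quasi-isometry to a cactus $Q_0$. I will use the equivalence of fat theta curves with metric relaxed $K_{2,3}$-minor models recorded in Section~\ref{S:prelim}, together with Lemma~\ref{L:theta-comparison}. The strategy is to push a fat theta curve in $X$ forward to $Q_0$ and show it still yields a fat theta curve there.

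The forward push works as follows. Suppose $X$ contains a $c$-fat theta curve with sets $A,B$ and three paths $\gamma_1,\gamma_2,\gamma_3$. Discretize each path at scale $1$ and connect successive image points by geodesics in $Q_0$, producing paths $\gamma_i'$ in $Q_0$. Replace $A$ and $B$ by the connected sets $A'$ and $B'$, each obtained as a bounded neighbourhood of $f(A)$ or $f(B)$ joined up by geodesics between images of nearby points. Distances are distorted by at most the factor $L$ plus an additive constant $C'$ depending on $L$ and $C$. Hence $d(A',B')\geq c/L - C'$, and the portions of the $\gamma_i'$ outside the neighbourhoods of $A'$ and $B'$ remain at mutual distance roughly $c/L - C'$. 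For $c$ large this gives a $c'$-fat theta curve in $Q_0$ with $c'$ as large as we like.

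It remains to see that a cactus has no $c'$-fat theta curve for, say, $c'\geq 1$. This is the step needing genuine care. In a cactus, every pair of points is separated by cut vertices, and each block is an edge or a cycle. Consider three paths from $A'$ to $B'$ that are pairwise far apart away from $A'$ and $B'$.

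Take a point $p$ on a geodesic from $A'$ to $B'$ at distance more than $c'$ from both. If $p$ lies on a bridge or is a cut vertex separating $A'$ from $B'$, all three paths must pass through $p$, contradicting fatness. Otherwise $p$ lies in a cycle block $Z$ separating $A'$ from $B'$, in the sense that $A'$ and $B'$ attach to $Z$ through cut vertices $a\neq b$. Every path from $A'$ to $B'$ must then traverse one of the two arcs of $Z$ between $a$ and $b$. Suitable points of these arcs, chosen far from $A'\cup B'$, are forced onto the three paths. By pigeonhole, two of the three paths must come within distance $1$ of each other at such a point, again contradicting fatness.

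The main obstacle is making this pigeonhole argument rigorous: choosing the separating cycle or cut vertex uniformly, and handling paths that wander away from it. If this becomes delicate, a fallback is to cite Fujiwara--Papasoglu's converse, or the Chepoi et al.\ $K_{2,3}$ characterization via the comparison in Section~\ref{S:prelim}, which already asserts that spaces quasi-isometric to cacti exclude fat theta curves.
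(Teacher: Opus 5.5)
Your reduction to the Main Theorem is the paper's, and your first fallback is exactly the paper's proof. The paper quotes the Fujiwara--Papasoglu characterization, whose constants are uniform, translates it into Definition~\ref{D:fat-theta} via Lemma~\ref{L:theta-comparison}, and applies the Main Theorem. The self-contained route you put first is a genuinely different, more elementary justification of the direction ``quasi-isometric to a cactus $\Rightarrow$ no fat theta curve''. It can be made to work, but two steps fail as written.

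\emph{Push-forward.} Suppose $A'$ is only a bounded neighbourhood of a connected coarse image of $f(A)$. Then Definition~\ref{D:fat-theta} is not preserved.
\begin{itemize}
\item $d(A',B')$ may be only about $3c/L$, which forces $c'<c/L$.
\item A point of $\gamma_i'$ outside $N_{c'}(A')$ can come from a point of $\gamma_i$ inside $N_c(A)$; you can only guarantee it is at distance about $c/L^2$ from $A$.
\item Inside $N_c(A)$ the hypothesis gives no separation at all, and two of the paths may even coincide there.
\end{itemize}
You must instead push forward pieces that are uniformly separated. Take coarse connected images of slightly enlarged open neighbourhoods of $A$ and $B$ of radius about $c$, together with the middle arcs from $u_i$ to $v_i$ as in the proof of Lemma~\ref{L:theta-comparison}. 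All non-incident pairs of these pieces are about $c$ apart. Their images are therefore at least $c/L-C'$ apart, giving a $c'$-fat theta curve with $c'\approx c/(3L)$. Your bound $c/L-C'$ suggests this is what you intended, but the construction has to say so.

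\emph{Cactus step.} Your assertion that $A'$ and $B'$ attach to $Z$ through cut vertices $a\neq b$ is unjustified, since $A'$ may contain an arc of $Z$. You also do not show that both arcs of $Z$ contain points far from $A'\cup B'$. The pigeonhole is in fact unnecessary.
\begin{itemize}
\item For any $c'>0$, the trimming in the proof of Lemma~\ref{L:theta-comparison} turns a $c'$-fat theta curve in a graph into two disjoint connected sets. These are joined by three pairwise disjoint arcs that meet them only at their endpoints.
\item Spanning trees for the three endpoints on each side then give a subdivided theta graph: two vertices joined by three internally disjoint paths.
\item A cactus cannot contain this, since the block containing those two vertices would be neither an edge nor a cycle.
\end{itemize}

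Your second fallback, via Chepoi et al., points the wrong way. Section~\ref{S:prelim} proves only that a metric relaxed $K_{2,3}$ yields a fat theta curve (Lemma~\ref{L:relaxed-theta}), whereas you would need the converse. Moreover, \cite{CDNRV} works with graphs, not general geodesic spaces.
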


\begin{proof}
By the characterization of Fujiwara--Papasoglu and
Lemma~\ref{L:theta-comparison}, $X$ contains no $c$-fat theta curve for some
$c>0$.  The constants in that characterization are uniform, so $c$, and hence
$K(c)$, depends only on the quasi-isometry constants.  The Main Theorem applies.
\end{proof}

\begin{Cor}\label{C:outerplanar}
If a connected graph is quasi-isometric to a connected outerplanar graph, then
it is $(1,K)$-quasi-isometric to a cactus for some $K$.
\end{Cor}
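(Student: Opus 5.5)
The plan is to reduce Corollary~\ref{C:outerplanar} to Corollary~\ref{C:quasi-cactus}. The key point to establish is that a connected outerplanar graph $O$ is quasi-isometric to a cactus, or at least that it contains no $c$-fat theta curve for some $c$ depending only on the quasi-isometry constants. Once this is in hand, let $G$ be a connected graph quasi-isometric to $O$. Viewed as a geodesic metric space (edges as unit intervals), $G$ is quasi-isometric to a cactus by composing quasi-isometries. Corollary~\ref{C:quasi-cactus} then gives a cactus $Q$ with $X$ and $Q$ $(1,K)$-quasi-isometric. Restricting to the vertex set of $G$ changes distances by at most $1$, so the conclusion holds with $K+1$.

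The main step is showing that an outerplanar graph contains no $c$-fat theta curve for some universal $c$. I would argue this by contradiction. Suppose $A,B$ and three paths $P_1,P_2,P_3$ form a $c$-fat theta curve in $O$ with $c$ large (say $c\ge 10$). Shrink $A$ and $B$ to connected subgraphs and contract them. The parts of the $P_i$ outside $N_c(A)\cup N_c(B)$ are pairwise far apart, hence vertex-disjoint. Cut each $P_i$ at its last visit to $N_c(A)$ and its first visit to $N_c(B)$, and connect these endpoints back to $A$ and $B$ through the neighbourhoods; since $A$ and $B$ together with their $c$-neighbourhoods are connected and disjoint from each other ($d(A,B)>3c$), they serve as branch sets. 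This gives three internally disjoint connections between two disjoint connected branch sets, and the middle segments are disjoint. Hence $O$ has a $K_{2,3}$ minor, contradicting outerplanarity. So outerplanar graphs contain no $c$-fat theta curve for any $c\ge 1$, say.

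By the Fujiwara--Papasoglu characterization together with Lemma~\ref{L:theta-comparison}, $O$ is then quasi-isometric to a cactus with uniform constants. Alternatively, and more directly, we can observe that fatness of theta curves is a quasi-isometry invariant up to changing $c$: under an $(L,C)$-quasi-isometry $f:G\to O$, a $c'$-fat theta curve in $G$ with $c'\gg L c+C$ maps (after connecting images of consecutive points by geodesics) to a $c$-fat theta curve in $O$. So $G$ has no $c'$-fat theta curve, and the Main Theorem applies directly to $G$.

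The expected obstacle is the minor extraction. The paths $P_i$ may wander back into $N_c(A)$ or $N_c(B)$ several times, and the $c$-neighbourhoods may intersect the paths in complicated ways. I would handle this by choosing, for each $i$, a maximal subpath of $P_i$ lying outside $N_c(A)\cup N_c(B)$ that goes from $\partial N_c(A)$ to $\partial N_c(B)$. Such a subpath exists because $P_i$ runs from $A$ to $B$ and $d(A,B)>3c$. I would then take the branch sets to be the closed neighbourhoods $\overline{N_c(A)}$ and $\overline{N_c(B)}$, which are connected and disjoint.
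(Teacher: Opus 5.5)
Your proof is correct, but it takes a different route from the paper's.

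\textbf{The paper's route.} The paper uses the outerplanar case of the Nguyen--Scott--Seymour theorem as a black box. This replaces $G$ by an outerplanar graph $H$ via a $(1,K_{\mathrm{op}})$-quasi-isometry. The paper then observes, by the same $K_{2,3}$-minor extraction you give, that $H$ has no $1$-fat theta curve. It applies the Main Theorem to $H$ with a universal constant and composes the two additive quasi-isometries.

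\textbf{Your route.} You never use NSS. Your minor argument shows that $O$ is quasi-isometric to a cactus, either via Fujiwara--Papasoglu or simply by applying the Main Theorem to $O$. Hence $G$ is quasi-isometric to a cactus, and Corollary~\ref{C:quasi-cactus} finishes.

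\textbf{Comparison.} Your route shows that Corollary~\ref{C:outerplanar} is really a special case of Corollary~\ref{C:quasi-cactus}, and it removes the dependence on an external theorem. Since cacti are outerplanar, it even recovers the outerplanar case of NSS as a byproduct. The price is that you rely on the quasi-isometry invariance of excluding fat theta curves. This is the direction of the Fujiwara--Papasoglu characterization built into Corollary~\ref{C:quasi-cactus}. The paper's argument never needs it, because there the Main Theorem is applied to $H$, not to $G$.

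\textbf{A caution on your second alternative.} If you prove this invariance by hand, do not push forward $A$ and $B$ themselves. A point of an image path lying more than $c$ from the image of $A$ may come from a point of $q_i$ within $c'$ of $A$, where the original theta curve guarantees no separation. Instead, use the connected-up images of $N_{c'}(A)$ and $N_{c'}(B)$ as the new connected sets. Then every point of an image path outside their $c$-neighbourhoods lies near the image of a point of $q_i$ outside $N_{c'}(A)\cup N_{c'}(B)$. The required separations then follow provided two choices are made. First, $c$ must exceed the coarse error $L\delta+C$ of the connect-the-dots construction with mesh $\delta$; this is allowed, since your minor argument excludes $c$-fat theta curves in $O$ for every large $c$. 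Second, $c'$ must be large compared with $L$, $C$ and $c$.
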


\begin{proof}
By the outerplanar case of the Nguyen--Scott--Seymour result \cite{NSS}, the
graph is $(1,K_{\mathrm{op}})$-quasi-isometric to a connected outerplanar graph $H$, for
some $K_{\mathrm{op}}$.  Since outerplanar graphs are $K_{2,3}$-minor-free, $H$ contains no
$1$-fat theta curve: otherwise the two connected endpoint neighbourhoods and
the three separated middle subpaths give a $K_{2,3}$-minor.  The Main Theorem
gives a $(1,K_{\mathrm{c}})$-quasi-isometry from $H$ to a cactus.  Composing the two maps
gives the result.
\end{proof}

Since the class of cacti is closed under edge contraction and subdivision,
Corollary~\ref{C:quasi-cactus} verifies the Nguyen--Scott--Seymour principle
for this class.  Corollary~\ref{C:outerplanar} also strengthens their
outerplanar conclusion by allowing the additive target to be chosen to be a
cactus.  

Our proof uses the layering partition of
Chepoi--Dragan--Newman--Rabinovich--Vax\`es.  Instead of constructing a cactus with the same vertex set as the original graph as they do, we pass instead to a quotient graph which we show that is a cactus.  The quotient map preserves height from a basepoint.  It is also distance non-increasing so we only need to show that the projection does not shorten distances by much.  A geodesic in the quotient decomposes into two radial pieces and an arc of at most one cycle; the latter consists of at most two height-monotone pieces.  These pieces lift isometrically to the original graph, and to lift the whole geodesic we only need to add at most three bounded connecting paths inside the fibres.  This yields the result.

We used ChatGPT-5.6 Sol for the exposition of the results in \cite{CDNRV}. 

\section{Coarse obstructions}\label{S:prelim}

If $A,B$ are subsets of a metric space, write
\[
        d(A,B)=\inf\{d(a,b):a\in A,\ b\in B\}.
\]
For $r>0$, let $N_r(A)$ denote the closed $r$-neighbourhood of $A$.
A path whose endpoints are $x$ and $y$ will be called an $x$--$y$ path.

\begin{Def}\label{D:fat-theta}
Let $c>0$.  A \emph{$c$-fat theta curve} consists of two non-empty connected
sets $A,B$ and three paths $q_1,q_2,q_3$ joining $A$ to $B$ such that
\[
        d(A,B)>3c,
\]
and such that the portions of $q_1,q_2,q_3$ outside
$N_c(A)\cup N_c(B)$ are pairwise at distance greater than $c$.
\end{Def}

Fujiwara--Papasoglu use the earlier definition in which the three paths have
common endpoints and the initial and terminal pieces are separated from one
another by a prescribed amount.  The two definitions differ only by a change
of constant:

\begin{Lem}\label{L:theta-comparison}
If $X$ contains an $M$-fat theta curve in the sense of
\cite[Definition~1.7]{FPcactus}, then it contains an $(M/2)$-fat theta curve in
the sense of Definition~\ref{D:fat-theta}.  Conversely, if $X$ contains a
$c$-fat theta curve in the sense of Definition~\ref{D:fat-theta}, then it
contains a $(c/3)$-fat theta curve in the sense of
\cite[Definition~1.7]{FPcactus}.
\end{Lem}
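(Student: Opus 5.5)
The plan is to prove both directions by explicit conversion, changing only the ``ends'' of the theta curve; the middle portions of the three paths stay the same. I recall \cite[Definition~1.7]{FPcactus} as follows: points $x,y$ and three arcs $\alpha_1,\alpha_2,\alpha_3$ from $x$ to $y$, with $d(x,y)$ large compared to $M$, whose subarcs outside the balls $B(x,M)$ and $B(y,M)$ are pairwise at distance at least $M$. The initial and terminal pieces near $x$ and $y$ are required to leave those balls in a separated way. The whole proof consists of matching the role of $N_c(A)\cup N_c(B)$ in Definition~\ref{D:fat-theta} with these balls, so I will fix this reading of \cite[Definition~1.7]{FPcactus} before starting.

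\emph{Forward direction.} Start from an $M$-fat theta curve in the sense of \cite{FPcactus}, with endpoints $x,y$ and arcs $\alpha_i$. Put $c=M/2$ and take $A$ and $B$ to be small connected sets at the two ends. The first choice to try is $A=\{x\}$ and $B=\{y\}$. If that fails, take $A$ to be the union of the initial subarcs of the $\alpha_i$ inside $B(x,M/2)$, which is connected because it is a union of arcs through $x$, and define $B$ similarly at $y$. Let $q_i$ be the part of $\alpha_i$ from its last point in $A$ to its first point in $B$. Then $d(A,B)\ge d(x,y)-M>3c$, using the lower bound on $d(x,y)$ in \cite{FPcactus}. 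The portions of the $q_i$ outside $N_c(A)\cup N_c(B)$ lie either outside $B(x,M)\cup B(y,M)$, where they are $M$-separated, or in the annulus between radii $M/2$ and $M$. In the annulus the separation of the initial pieces required in \cite[Definition~1.7]{FPcactus} gives distance greater than $M/2$. Hence we obtain an $(M/2)$-fat theta curve in the sense of Definition~\ref{D:fat-theta}.

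\emph{Converse direction.} Start from $A,B,q_1,q_2,q_3$ as in Definition~\ref{D:fat-theta}. The issue is that $A$ is only connected, so I will use the geodesic structure. For a connected set $A$ and any $\varepsilon>0$, the neighbourhood $N_\varepsilon(A)$ is path connected: connectedness gives $\varepsilon$-chains between any two points of $A$, and consecutive points of a chain can be joined by geodesics of length less than $\varepsilon$. Fix $x\in A$ and $y\in B$. Join the endpoint of each $q_i$ in $A$ to $x$ by a path inside $N_{c/3}(A)$, and join its endpoint in $B$ to $y$ by a path inside $N_{c/3}(B)$. This gives three $x$--$y$ paths. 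Make them arcs by removing loops. Where two of them overlap near $A$, cut at the last common point, so that the union is a theta graph with branch points near $A$ and near $B$. Finally, with $M=c/3$, check the conditions: the middle portions are more than $c$-separated by hypothesis, and $d(x,y)>3c$.

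The main obstacle is the end condition in \cite[Definition~1.7]{FPcactus}. The set $A$ may have large diameter, so $N_{c}(A)$ need not lie in a ball of radius $c/3$ about $x$, and the pieces of the arcs inside $N_c(A)$ but far from $x$ are not controlled by the ball condition. I would handle this by choosing the branch points on the arcs themselves: reparametrize so that the theta curve ``starts'' at the point where the arcs leave $N_c(A)$. If this does not match the definition, I would shrink $A$ to a subset of bounded diameter containing the relevant endpoints. The factor $3$ in $c/3$ should absorb the resulting losses.
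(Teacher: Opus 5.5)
Your proposal has a genuine gap, and it starts with the definition it is built on. You work from a guessed form of Fujiwara--Papasoglu's Definition~1.7: balls $B(x,M)$, $B(y,M)$, plus an unspecified requirement that the arcs ``leave those balls in a separated way''. Both directions then remain conditional (``if that fails'', ``if this does not match the definition'', ``the factor $3$ should absorb the losses''). The paper's proof uses that definition in a different form. Each of the three $x$--$y$ arcs is split into an initial piece $\alpha_i$ at $x$, a middle piece $\gamma_i$ and a terminal piece $\beta_i$ at $y$. The middle pieces are pairwise at distance at least $M$, and the initial pieces are at distance at least $2M$ from the terminal ones. No balls or diameter bounds occur. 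With this reading the forward direction is immediate. The sets $A=\alpha_1\cup\alpha_2\cup\alpha_3$ and $B=\beta_1\cup\beta_2\cup\beta_3$ are connected (they contain $x$, resp.\ $y$), and $d(A,B)\geq 2M>3M/2$. The $\gamma_i$ are three $A$--$B$ paths that are $M$-separated everywhere. Your argument instead invokes a separation ``in the annulus'' that the definition does not provide. Your trial choice $A=\{x\}$ also fails in general, since the initial pieces may run close together far from $x$.

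In the converse, the step ``the middle portions are more than $c$-separated by hypothesis'' is not justified. You never isolate the middle subarcs, and the hypothesis only separates the parts of the $q_i$ outside $N_c(A)\cup N_c(B)$. A path $q_i$ may re-enter $N_c(A)$ or $N_c(B)$ and come close to $q_j$ there. Nor is $d(x,y)>3c$ the end condition to verify. The missing idea is the truncation used in the paper:
\begin{itemize}
\item Let $v_i$ be the first point of $q_i$ in $N_c(B)$, and $u_i$ the last point of $q_i$ in $N_c(A)$ before $v_i$. This is not merely the first exit from $N_c(A)$, as in your ``reparametrize'' remark.
\item The open arc $\gamma_i$ from $u_i$ to $v_i$ then lies outside $N_c(A)\cup N_c(B)$, so the three $\gamma_i$ are pairwise more than $c$ apart.
\item Join $u_i$ to a fixed $a\in A$ inside $N^{\circ}_{c+\varepsilon}(A)$, and $v_i$ to a fixed $b\in B$ inside $N^{\circ}_{c+\varepsilon}(B)$, with $\varepsilon<c/6$. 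Choose the connecting paths so that their part outside $N_c(A)$, resp.\ $N_c(B)$, has diameter less than $\varepsilon$; then they cannot meet $\gamma_j$ for $j\neq i$.
\item Trim at the last meeting of $\gamma_i$ with the initial arc and the first meeting with the terminal arc.
\end{itemize}
The end condition is then $d\bigl(N^{\circ}_{c+\varepsilon}(A),N^{\circ}_{c+\varepsilon}(B)\bigr)>3c-2(c+\varepsilon)>2c/3=2M$, which is where the $3$ in $c/3$ is used. In particular, the large diameter of $A$, which you single out as the main obstacle, is no obstacle at all. Nothing requires the initial arcs to stay near $x$, so shrinking $A$ is neither needed nor justified.
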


\begin{proof}
For the first assertion let $\alpha_i,\beta_i$ be the initial and terminal
arcs in the Fujiwara--Papasoglu definition.  Put
\[
 A=\alpha_1\cup\alpha_2\cup\alpha_3,
 \qquad
 B=\beta_1\cup\beta_2\cup\beta_3.
\]
These sets are connected, $d(A,B)\geq2M>3M/2$, and the three remaining
middle arcs are pairwise at distance at least $M>M/2$.

Conversely, let $A,B,q_1,q_2,q_3$ be a $c$-fat theta curve.  Choose
$0<\varepsilon<c/6$.  For each $i$, let $v_i$ be the first point of $q_i$
in $N_c(B)$ and let $u_i$ be the last point of $q_i$ in $N_c(A)$ before
$v_i$.  Then the open subpath $\gamma_i$ from $u_i$ to $v_i$ lies outside
$N_c(A)\cup N_c(B)$.

Fix $a\in A$ and $b\in B$.  Since the open $\varepsilon$-neighbourhood of
a connected subset of a geodesic space is path connected (use arbitrarily
fine chains in the connected set and join consecutive points geodesically),
each $u_i$ can be joined to $a$ by a path
$\alpha_i\subset N_{c+\varepsilon}^{\circ}(A)$, and each $v_i$ can be
joined to $b$ by a path
$\beta_i\subset N_{c+\varepsilon}^{\circ}(B)$.  The paths may be chosen
so that the part of $\alpha_i$ outside $N_c(A)$, and the part of $\beta_i$
outside $N_c(B)$, have diameter less than $\varepsilon$.  For $i\neq j$
these small terminal pieces cannot meet the open middle arc of $q_j$, since
$u_i,v_i$ are limits of the corresponding middle portions and those portions
are pairwise more than $c$ apart.  Trimming at the last intersection with
$\alpha_i$ and the first intersection with $\beta_i$, if necessary, then
makes the open middle arcs disjoint from all initial and terminal arcs.  The
open middle arcs are still pairwise at distance greater than $c$, while
\[
 d\bigl(N_{c+\varepsilon}^{\circ}(A),
         N_{c+\varepsilon}^{\circ}(B)\bigr)
 >c-2\varepsilon>2c/3.
\]
Thus the three resulting paths form a $(c/3)$-fat theta curve in the
Fujiwara--Papasoglu sense.
\end{proof}

The following elementary reduction allows us to work with graphs.

\begin{Lem}\label{L:graph-reduction}
Let $X_0$ be a geodesic metric space containing no $c_0$-fat theta curve.
Let $\Gamma(X_0)$ be the unit-edge graph with vertex set $X_0$, in which two
vertices are adjacent whenever their distance in $X_0$ is at most one.  Then
$X_0$ and $\Gamma(X_0)$ are $(1,1)$-quasi-isometric.  Moreover, if $c$ is an
integer with
\[
        c\geq100,\qquad c>3c_0+3,
\]
then $\Gamma(X_0)$ contains no $c$-fat theta curve.
\end{Lem}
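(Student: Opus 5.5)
The plan has two parts: first the quasi-isometry, then transferring fat theta curves from $\Gamma=\Gamma(X_0)$ back to $X_0$.

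\textbf{The quasi-isometry.} Take $F:X_0\to\Gamma$ to be the identity on points. Let $x,y\in X_0$ and $n=\lceil d(x,y)\rceil$. Divide a geodesic from $x$ to $y$ into $n$ pieces of length at most $1$. This gives a vertex path of length $n$, so
\[
d_\Gamma(x,y)\le d(x,y)+1 .
\]
Conversely, consecutive vertices of a $\Gamma$-path are at $X_0$-distance at most one, so the triangle inequality gives $d(x,y)\le d_\Gamma(x,y)$ on vertices. Every point of $\Gamma$ (including edge interiors) lies within $1/2$ of a vertex. Hence $F$ is a $(1,1)$-quasi-isometry, with the surjectivity constant at most $1$.

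\textbf{No fat theta curves.} Suppose $A,B,q_1,q_2,q_3$ is a $c$-fat theta curve in $\Gamma$; we build a $c_0$-fat theta curve in $X_0$.

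1. Define the realisation map $\rho:\Gamma\to X_0$. Send each vertex to itself, and send each edge $[u,v]$ to a chosen geodesic in $X_0$ of length $\le1$, parametrised proportionally.
2. Then $\rho$ is continuous and maps connected sets to connected sets.
3. For $p\in\Gamma$, $\rho(p)$ lies within distance $1$ of a vertex within $\Gamma$-distance $1/2$ of $p$.
4. Combining this with the vertex estimate $d\le d_\Gamma\le d+1$ gives, for all $p,p'\in\Gamma$,
\[
 d_\Gamma(p,p')-3\le d(\rho p,\rho p')\le d_\Gamma(p,p')+2 .
\]
The exact constants are not important.

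Now set $A'=\rho(A)$, $B'=\rho(B)$ and $q_i'=\rho\circ q_i$. Then $A',B'$ are connected, and
\[
d(A',B')\ge d_\Gamma(A,B)-3>3c-3\ge 3c_0 .
\]
This is where $c>3c_0+3$, or at least $3c-3>3c_0$, is used.

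\textbf{Main obstacle.} The hard step is the separation of the middle portions: points of $q_i'$ outside $N_{c_0}(A')\cup N_{c_0}(B')$ must come from points of $q_i$ outside $N_c(A)\cup N_c(B)$. Here is how to check it. Take $x=\rho(p)$ with $p\in q_i$.
1. If $p\in N_c(A)$, then $d(x,A')\le c+2$. This does not put $x$ inside $N_{c_0}(A')$, because $c+2>c_0$.
2. So instead I will shrink. Replace $A,B$ in $\Gamma$ by $N_{c-c'}(A)$ and $N_{c-c'}(B)$, which are still connected in the geodesic space $\Gamma$, and replace $c$ by a smaller $c'$.
3. The cleaner route is to note directly that if $x\notin N_{c_0}(A')$, then $d_\Gamma(p,A)\ge c_0-2$.

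The problem is that this only certifies $p\notin N_{c_0-2}(A)$, not $p\notin N_c(A)$. So the middle portions of $q_i'$ may come from points of $q_i$ lying in $N_c(A)\setminus N_{c_0-2}(A)$, where the $q_i$ need not be separated.

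To fix this I would run the argument with enlarged sets:
\[
A'=\rho\bigl(N_{c-c_0-3}(A)\bigr),\qquad B'=\rho\bigl(N_{c-c_0-3}(B)\bigr),
\]
both connected. The constants then work out as follows.
1. Points of $q_i$ inside $N_c(A)$ map within $c_0+2+\dots$ of $A'$. Tune the radius so that they land in $N_{c_0}(A')$.
2. Points of $q_i'$ outside $N_{c_0}(A')\cup N_{c_0}(B')$ therefore come from the middle portions of the $q_i$.
3. Those middle portions are more than $c$ apart in $\Gamma$, so their images are more than $c-3>c_0$ apart in $X_0$.
4. The separation of $A',B'$ needs $3c-2(c-c_0-3)-3>3c_0$, which reduces to $c>c_0-3$ and holds.

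Paths $q_i'$ may start at $\rho$ of a point of $A$ rather than of $A'$, which is fine since $A\subset N_{c-c_0-3}(A)$. The condition $c\ge100$ leaves room to absorb the $\pm3$ errors. This contradicts the assumption on $X_0$.
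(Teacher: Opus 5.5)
Your overall strategy is the paper's: the identity map for the quasi-isometry, the realisation map $\rho$, and pushing a $c$-fat theta curve in $\Gamma$ forward to $X_0$ after enlarging the endpoint sets. However, the enlargement you settle on is too small, and the key step fails for it. Take $A'=\rho(N_{c-c_0-3}(A))$. A point $p\in q_i$ with $d_\Gamma(p,A)$ close to $c$ lies at $\Gamma$-distance about $c_0+3$ from $N_{c-c_0-3}(A)$, so you only get $d(\rho p,A')\le c_0+3$ (or $c_0+5$ with your stated constants). Such points need not lie in $N_{c_0}(A')$, yet they are not in the separated middle portions of the $q_i$.

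So your claim that points of $q_i'$ outside $N_{c_0}(A')\cup N_{c_0}(B')$ come from the middle portions is unproved. You have shrunk the problematic shell from your first attempt, not removed it. The containment needs a radius $R$ with $c-R$ plus the Lipschitz error at most $c_0$, i.e.\ $R$ at least about $c-c_0$, not $c-c_0-3$. Your separation check reduced to $c>c_0-3$, far weaker than the hypothesis $c>3c_0+3$; that is a sign that $A',B'$ were made too small.

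The paper simply takes $R=c$, that is $A'=\rho(N_c(A))$ and $B'=\rho(N_c(B))$. Then the containment is trivial: $p\in q_i\cap N_c(A)$ gives $\rho(p)\in A'$. So every point of $\rho(q_i)$ outside $N_{c_0}(A')\cup N_{c_0}(B')$ is the image of a point of the middle portion, and these images are pairwise more than $c-3>c_0$ apart. For the separation, $d_\Gamma(N_c(A),N_c(B))>3c-2c=c$, hence $d(A',B')>c-3>3c_0$. This is exactly where $c>3c_0+3$ is used.

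This step needs the lower bound $d(\rho p,\rho p')\ge d_\Gamma(p,p')-3$ with constant exactly $3$. Your step 3 (``within distance $1$ of a vertex'') only yields $-4$, which would require $c>3c_0+4$, and that is not given. Use instead that $\rho(p)$ lies within $1/2$ of the $\Gamma$-nearest vertex, since each edge is mapped proportionally onto a geodesic of length at most $1$.
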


\begin{proof}
For vertices $x,y$,
\[
 d_{X_0}(x,y)\leq d_{\Gamma(X_0)}(x,y)\leq d_{X_0}(x,y)+1,
\]
and every point of $\Gamma(X_0)$ is within $1/2$ of a vertex.  Thus the
identity on the common vertex set is a $(1,1)$-quasi-isometry.

For the second assertion, realize each edge of $\Gamma(X_0)$ by a geodesic
segment in $X_0$.  The resulting map $r:\Gamma(X_0)\to X_0$ satisfies
\[
        d_{X_0}(r(p),r(q))\geq d_{\Gamma(X_0)}(p,q)-3.
\]
If $A,B,q_1,q_2,q_3$ were a $c$-fat theta curve in $\Gamma(X_0)$, the
connected sets $r(N_c(A))$ and $r(N_c(B))$ would be more than $3c_0$ apart.
Moreover, a point of $r(q_i)$ outside their $c_0$-neighbourhoods can only come
from the middle portion of $q_i$; the three such portions therefore remain
pairwise more than $c-3>c_0$ apart.  Their images form a $c_0$-fat theta
curve in $X_0$, a contradiction.
\end{proof}

From now on we replace the original space by the connected unit-edge graph
$X=\Gamma(X_0)$, equipped with its combinatorial metric, and use the integer
$c$ supplied by Lemma~\ref{L:graph-reduction}.

Two subsets of $X$ are called $r$-far if their distance is greater than
$r$.  Let $H$ be a finite graph.  A \emph{relaxed minor model} of $H$ in
$X$ assigns to every vertex $u$ of $H$ a connected branch set
$V_u\subset X$ and to every edge $e=uv$ a path $P_e$ joining $V_u$ to
$V_v$, so that distinct branch sets are disjoint, $P_e$ avoids every
branch set not corresponding to an endpoint of $e$, and paths corresponding
to non-incident edges are disjoint.  Paths corresponding to incident edges
are allowed to meet away from their common branch set.

The model is an \emph{$r$-metric relaxed minor model} if, whenever
$e=uv$ and $e'=u'v'$ are non-incident edges of $H$, the two sets
\[
        V_u\cup P_e\cup V_v,
        \qquad
        V_{u'}\cup P_{e'}\cup V_{v'}
\]
are $r$-far.  When $H=K_{2,3}$ we speak of an
\emph{$r$-metric relaxed $K_{2,3}$}.  This is the terminology of
\cite[Section~4]{CDNRV}.

\begin{Lem}\label{L:relaxed-theta}
If $X$ contains a $100c$-metric relaxed $K_{2,3}$, then $X$ contains a
$c$-fat theta curve.
\end{Lem}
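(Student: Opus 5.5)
Label the vertices of $K_{2,3}$ as $a,b$ (the two-element side) and $x_1,x_2,x_3$. The model gives disjoint connected branch sets $V_a,V_b,V_{x_1},V_{x_2},V_{x_3}$ and six paths $P_{ax_i},P_{bx_i}$, and the metric condition holds with $r=100c$. The plan is to take $A=V_a$ and $B=V_b$, and to let $q_i$ be a path from $V_a$ to $V_b$ that runs through the $i$-th "spoke". Concretely, $q_i$ follows $P_{ax_i}$ from $V_a$ into $V_{x_i}$, then a path inside the connected set $V_{x_i}$, then $P_{bx_i}$ to $V_b$. Thus $q_i\subset P_{ax_i}\cup V_{x_i}\cup P_{bx_i}$.

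First I check $d(A,B)>3c$. The edges $ax_1$ and $bx_2$ are non-incident, so $V_a\cup P_{ax_1}\cup V_{x_1}$ and $V_b\cup P_{bx_2}\cup V_{x_2}$ are $100c$-far. In particular $d(V_a,V_b)>100c>3c$.

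The main step is separation of the middle portions. Take $i\neq j$, a point $p\in q_i\setminus(N_c(A)\cup N_c(B))$ and a point $p'\in q_j\setminus(N_c(A)\cup N_c(B))$. The metric condition only separates unions attached to non-incident edges, so we argue by cases.

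\begin{itemize}
\item If $p\in P_{ax_i}\cup V_{x_i}$ and $p'\in P_{bx_j}\cup V_{x_j}$, the edges $ax_i$ and $bx_j$ are non-incident, so $d(p,p')>100c$.
\item The symmetric case $p\in P_{bx_i}\cup V_{x_i}$, $p'\in P_{ax_j}\cup V_{x_j}$ is identical.
\item The remaining case is $p\in P_{ax_i}$ and $p'\in P_{ax_j}$ (or the analogue at $b$). These incident paths may even meet, so the condition gives nothing directly.
\end{itemize}

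The remaining case is the real obstacle. I would handle it by pre-trimming the paths. Along $P_{ax_i}$, consider the last point lying within distance $c$ of $P_{ax_j}\cup P_{ax_k}$, where $\{i,j,k\}=\{1,2,3\}$. The aim is to show that every such point is close to $V_a$, namely within $N_c(V_a)$ after enlarging $A$.

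To do this, replace $A$ by the connected set
\[
A'=V_a\cup\bigcup_i P_{ax_i}[\text{initial segment up to the last point within } 2c \text{ of another } P_{ax_j}],
\]
together with short geodesic connectors of length at most $2c$. This keeps $A'$ connected. Define $B'$ analogously. Then, by construction, the middle portions of the paths $a$-side-to-$a$-side are more than $c$ apart.

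It remains to check that $d(A',B')>3c$ and that the other separations survive. Every point of $A'$ lies in $P_{ax_i}\cup V_a$ or within $2c$ of it. Every point of $B'$ lies within $2c$ of $P_{bx_j}\cup V_b$. We can pick $j\neq i$, or use $V_a$ against $P_{bx_j}$, since $a$ and $bx_j$ are non-incident pieces. Hence the distance is at least $100c-4c>3c$, and the cross cases above degrade only to $100c-O(c)>c$.

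The care needed is in choosing "last point" cutoffs so that $A'$ stays connected and the middle segments genuinely avoid $N_c(A')$. The factor $100$ leaves ample room for the constants.
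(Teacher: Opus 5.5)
There is a genuine gap at the step $d(A',B')>3c$. Your $A'$ contains an initial segment of $P_{ax_i}$, and your $B'$ contains an initial segment of $P_{bx_i}$ for the \emph{same} index $i$. The edges $ax_i$ and $bx_i$ share $x_i$, so the metric-relaxed condition gives no lower bound on the distance between these two segments. Your move ``pick $j\neq i$'' is not available, because the relevant point of $B'$ lies on $P_{bx_i}$ itself.

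This is not a technicality. Take singleton branch sets and long paths, and add three bridges of length $c$:
from $p\in P_{ax_1}$ to $p'\in P_{bx_1}$; from $w_1\in P_{ax_1}$ to $u\in P_{ax_2}$; and from $w_1'\in P_{bx_1}$ to $u'\in P_{bx_2}$.
Arrange that $p$ comes before $w_1$ on $P_{ax_1}$ (read from $V_a$) and $p'$ comes before $w_1'$ on $P_{bx_1}$ (read from $V_b$). Also arrange that all marked points and branch vertices are at least $1000c$ apart along each path. Every bridge joins paths of incident edges. Hence any route between complete sets of non-incident edges must run along at least $1000c$ of some path, and the model is $100c$-metric. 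Your cutoff on $P_{ax_1}$ lies at or beyond $w_1$, and your cutoff on $P_{bx_1}$ at or beyond $w_1'$. So $p\in A'$, $p'\in B'$ and $d(A',B')\le c$.

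The underlying problem is a conflict your construction cannot resolve. With the long spokes $q_i$ through $V_{x_i}$, you must push $A$ far out along $P_{ax_i}$ to absorb closeness to the other $a$-side paths. Meanwhile $P_{ax_i}$ may approach $P_{bx_i}$ anywhere along the way.

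The missing idea, used in the paper, is to let closeness of the two halves at $x_i$ decide where to cut, and to replace the long spokes by short paths.
\begin{enumerate}
\item Form $Q_i$ from $P_{ax_i}$, a path inside $V_{x_i}$, and $P_{bx_i}$, and fix $z_i$ on the part in $V_{x_i}$.
\item Choose $s_i$ on the $a$-half and $t_i$ on the $b$-half with $d(s_i,t_i)\le 10c$, as far from $z_i$ as possible.
\item Maximality and integrality give $d(s_i,t_i)=10c$. They also give $d(L_i,R_i)=10c$ for the outer tails $L_i$ (ending in $V_a$) and $R_i$ (ending in $V_b$).
\item For $i\neq j$, non-incidence gives $d(L_i,R_j)>100c$.
\item Then $A=V_a\cup L_1\cup L_2\cup L_3$ and $B=V_b\cup R_1\cup R_2\cup R_3$ are connected and $10c$ apart.
\item The three theta paths are geodesics $\gamma_i$ from $s_i$ to $t_i$. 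We have $\gamma_i\subset N_{10c}(t_i)$ and $\gamma_j\subset N_{10c}(s_j)$. Since $t_i$ and $s_j$ lie in the complete sets of the non-incident edges $bx_i$ and $ax_j$, the whole paths $\gamma_i,\gamma_j$ are more than $80c$ apart.
\end{enumerate}
In this construction, closeness among the $a$-side (or $b$-side) paths is harmless, because it is swallowed by $A$ (or $B$). Closeness of $P_{ax_i}$ to $P_{bx_i}$ is exactly what fixes the cut.
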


\begin{proof}
Write the two parts of $K_{2,3}$ as
\[
        \{a_1,a_2\},\qquad \{b_1,b_2,b_3\}.
\]
Let $V_{a_1},V_{a_2},V_{b_1},V_{b_2},V_{b_3}$ be the branch sets of a
$100c$-metric relaxed model, and write $P_{ki}$ for the model path
corresponding to the edge $a_kb_i$.  For each $i$, join the endpoint of
$P_{1i}$ in $V_{b_i}$ to the endpoint of $P_{2i}$ in $V_{b_i}$ by a path
$T_i\subset V_{b_i}$.  In this way we obtain a path $Q_i$ from
$V_{a_1}$ to $V_{a_2}$.  Fix a point $z_i\in T_i$ and regard $Q_i$ as the
concatenation of a left subpath $Q_i^-$ from $V_{a_1}$ to $z_i$ and a right
subpath $Q_i^+$ from $z_i$ to $V_{a_2}$.

For $i\neq j$, the abstract edges $a_1b_j$ and $a_2b_i$ are non-incident.
Consequently
\[
        d(V_{a_1},V_{b_i})>100c.
\]
Similarly $d(V_{a_2},V_{b_i})>100c$.  Parameterize $Q_i^-$ and $Q_i^+$
from $z_i$ towards $V_{a_1}$ and $V_{a_2}$, respectively.  Among all pairs
$x\in Q_i^-$, $y\in Q_i^+$ with $d(x,y)\leq10c$, choose a pair
$x_i,y_i$ for which the sum of the two parameter distances from $z_i$ is
maximal.  Such a pair exists, and
\[
        d(x_i,y_i)=10c.
\]
Indeed, all distances between vertices are integral.  If the distance were
smaller than $10c$, one of the two points could be moved one edge farther away
from $z_i$ while preserving the inequality $d(x,y)\leq10c$.  The two outer
endpoints cannot both have been reached, since $V_{a_1}$ and $V_{a_2}$ are
$100c$-far by the metric-relaxed condition.

Let $L_i$ be the part of $Q_i^-$ from $V_{a_1}$ to $x_i$, and let $R_i$ be
the part of $Q_i^+$ from $y_i$ to $V_{a_2}$.  The maximal choice of
$x_i,y_i$ gives
\begin{equation}\label{E:self-tail-separation}
        d(L_i,R_i)=10c.
\end{equation}
For $i\neq j$, the sets $L_i$ and $R_j$ lie in the complete sets associated
to the non-incident edges $a_1b_i$ and $a_2b_j$, and therefore
\begin{equation}\label{E:cross-tail-separation}
        d(L_i,R_j)>100c.
\end{equation}
The same observation, using an index different from the one under
consideration, shows that $V_{a_1}$ is $100c$-far from every $R_i$, that
$V_{a_2}$ is $100c$-far from every $L_i$, and that
$d(V_{a_1},V_{a_2})>100c$.

Let
\[
 A=V_{a_1}\cup L_1\cup L_2\cup L_3,
 \qquad
 B=V_{a_2}\cup R_1\cup R_2\cup R_3.
\]
These sets are connected, and \eqref{E:self-tail-separation} and
\eqref{E:cross-tail-separation} give
\[
        d(A,B)=10c>3c.
\]
For each $i$, let $\gamma_i$ be a geodesic from $x_i$ to $y_i$.  Its length
is $10c$, so $\gamma_i$ is an $A$--$B$ path.  If $i\neq j$, every point of
$\gamma_i$ is within $10c$ of $y_i$, and every point of $\gamma_j$ is
within $10c$ of $x_j$.  Since $y_i$ and $x_j$ lie in the complete sets
associated to the non-incident edges $a_2b_i$ and $a_1b_j$,
\[
        d(\gamma_i,\gamma_j)>100c-20c>c.
\]
Thus the portions of the three paths $\gamma_1,\gamma_2,\gamma_3$ outside
$N_c(A)\cup N_c(B)$ are pairwise at distance greater than $c$.  They form a
$c$-fat theta curve.
\end{proof}

\begin{Prop}\label{P:obstructions}
Put
\[
        \lambda=100c.
\]
Then $X$ contains no $\lambda$-metric relaxed $K_{2,3}$.
\end{Prop}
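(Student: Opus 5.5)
This is an immediate consequence of Lemma~\ref{L:relaxed-theta} combined with the standing hypothesis on $X$. The plan is to argue by contradiction. Suppose $X$ contains a $\lambda$-metric relaxed $K_{2,3}$ with $\lambda=100c$. Then Lemma~\ref{L:relaxed-theta} applies verbatim, since its hypothesis is exactly that $X$ contains a $100c$-metric relaxed $K_{2,3}$. It yields a $c$-fat theta curve in $X$, in the sense of Definition~\ref{D:fat-theta}.

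On the other hand, $X=\Gamma(X_0)$ was chosen with the integer $c$ supplied by Lemma~\ref{L:graph-reduction}. That lemma guarantees that $X$ contains no $c$-fat theta curve. This is the contradiction, and it proves the proposition.

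The only point requiring care is bookkeeping. The constant $c$ in the proposition, in Lemma~\ref{L:relaxed-theta}, and in Lemma~\ref{L:graph-reduction} must be the same integer fixed after the graph reduction. The notion of fat theta curve must also be the same in all three places: both lemmas use Definition~\ref{D:fat-theta}, not the Fujiwara--Papasoglu version. Both hold by the convention stated after Lemma~\ref{L:graph-reduction}, so no further argument is needed.

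There is no genuine obstacle here. All the real work (extracting the sets $A$, $B$ and the three separated paths from the model) was already done in Lemma~\ref{L:relaxed-theta}.
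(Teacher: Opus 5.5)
Your proof is correct and is exactly the paper's argument. A $100c$-metric relaxed $K_{2,3}$ would, by Lemma~\ref{L:relaxed-theta}, yield a $c$-fat theta curve, which contradicts the standing choice of $X$ and the integer $c$ made via Lemma~\ref{L:graph-reduction}.
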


\begin{proof}
Otherwise Lemma~\ref{L:relaxed-theta} would give a $c$-fat theta curve.
\end{proof}

Fix a base vertex $v\in X$ and write
\[
        f(x)=d(v,x).
\]
We call $f(x)$ the \emph{height} of $x$.

\begin{Rem}
The theta graph may be represented by $K_4^-$, and $K_{2,3}$ is a subdivision of it.  Consequently, up to a change of constants, the existence of a fat theta curve is equivalent to the existence of a fat $K_{2,3}$-minor (up to change of constants).  We leave the elementary verification to the reader.
\end{Rem}

\section{Layering partitions and large clusters}\label{S:layering}

We recall in this section the layering-partition
construction of Chepoi--Dragan--Newman--Rabinovich--Vax\`es
\cite{CDNRV}.  
\subsection{The layering tree}

For $n\geq0$ put
\[
        L_n=\{x\in X:d(v,x)=n\}.
\]
Two vertices $x,y\in L_n$ are equivalent if they can be joined by a path in
$X\setminus B(v,n-1)$.  The equivalence classes are called the
\emph{clusters} of the $n$-th layer.  For $n=0$ there is the single cluster
$\{v\}$.  If $C$ is a cluster, its level will be denoted by $h(C)$.

Two clusters are declared adjacent if some edge of $X$ has one endpoint in
each of them.  Distinct clusters in the same layer cannot be adjacent: such
an edge would itself join them outside $B(v,n-1)$.  Thus distinct adjacent
clusters lie in consecutive layers.

\begin{Def}\label{D:layering-tree}
The graph whose vertices are the clusters and whose edges join adjacent
clusters is called the \emph{layering tree} and is denoted by $\Gamma$.
\end{Def}

\begin{Prop}\label{P:layering-tree}
The layering tree $\Gamma$ is a rooted tree, with root $\{v\}$.  Every cluster
$C\subset L_n$, $n\geq1$, has a unique parent cluster in $L_{n-1}$.
\end{Prop}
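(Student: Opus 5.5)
We need to show that $\Gamma$ is a tree rooted at $\{v\}$ and that each cluster $C\subset L_n$ with $n\ge1$ has exactly one adjacent cluster in $L_{n-1}$. Connectivity of $\Gamma$ and the acyclicity will then follow from the unique-parent property together with the fact, already observed, that adjacent clusters lie in consecutive layers.

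\emph{Existence of a parent.} Let $x\in C$ with $n\ge1$. A geodesic from $v$ to $x$ passes through a vertex $x'\in L_{n-1}$ adjacent to $x$. The cluster of $x'$ is therefore adjacent to $C$.

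\emph{Uniqueness of the parent.} Suppose $C$ is adjacent to clusters $C_1,C_2\subset L_{n-1}$. Then there are edges $x_1y_1$ and $x_2y_2$ with $x_i\in C$ and $y_i\in C_i$. Since $x_1,x_2\in C$, they are joined by a path $P$ in $X\setminus B(v,n-1)$, so every vertex of $P$ has height at least $n$. The concatenation $y_1x_1\,P\,x_2y_2$ is then a path whose vertices all have height at least $n-1$. Hence it lies in $X\setminus B(v,n-2)$, and $y_1,y_2$ are equivalent, i.e.\ $C_1=C_2$. Here we use the convention that $B(v,r)$ is the closed ball, so $X\setminus B(v,n-1)$ consists of the vertices of height $\ge n$. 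The only care needed is to handle edges consistently: a path in a subset uses edges with both endpoints in that subset.

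\emph{Tree structure.} Assign to each cluster $C\neq\{v\}$ the edge to its unique parent. Every edge of $\Gamma$ joins clusters in consecutive layers, so it is exactly the parent edge of its lower cluster. Thus the edges of $\Gamma$ are in bijection with the non-root vertices. Iterating the parent map from any cluster at level $n$ reaches $\{v\}$ in $n$ steps, so $\Gamma$ is connected.

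For acyclicity, suppose $\Gamma$ has a simple cycle and take a cluster of maximal level on it. Its two cycle-neighbours are distinct and both adjacent to it. They cannot lie in its own layer, since distinct clusters in the same layer are not adjacent, and they cannot lie in a higher layer by maximality. So both are in the layer below, i.e.\ both are parents, contradicting uniqueness.

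Hence $\Gamma$ is a tree rooted at $\{v\}$, and the parent of $C$ is its unique neighbour in $L_{n-1}$. No step is a serious obstacle; the only point to verify carefully is the uniqueness argument, which depends on the equivalence being defined via paths avoiding the closed ball $B(v,n-1)$.
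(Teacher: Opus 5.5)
Your proof is correct and follows essentially the paper's argument: a parent exists because a geodesic to $v$ passes through $L_{n-1}$, the parent is unique because a path in $X\setminus B(v,n-1)$ extended by two edges joins the two candidate parents outside $B(v,n-2)$, and the tree structure then follows from the unique-parent property. Your maximal-level argument, that a highest cluster on a cycle would have two parents, simply makes explicit the acyclicity the paper asserts in one line.
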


\begin{proof}
Every vertex of $L_n$ has a neighbour in $L_{n-1}$ on a geodesic to $v$, so
every cluster has at least one adjacent cluster in the preceding layer.
Suppose that $x,y\in C\subset L_n$ have neighbours $x^-,y^-\in L_{n-1}$.
There is an $x$--$y$ path outside $B(v,n-1)$.  Adding the two edges
$x^-x$ and $yy^-$ gives an $x^-$--$y^-$ path outside $B(v,n-2)$.
Thus $x^-$ and $y^-$ lie in the same cluster of $L_{n-1}$, proving
uniqueness of the parent.  Since level decreases by one along every parent
edge, repeated passage to the parent reaches the root and no cycle is
possible.
\end{proof}

A cluster $D$ whose parent is $C$ will be called a \emph{child} of $C$.

Set
\[
        \Lambda=4\lambda+2.
\]

The following consequence of \cite[Proposition~4 and Corollary~8]{CDNRV} will be used to produce cycles from the clusters of the layering partition.

\begin{Prop}[Chepoi--Dragan--Newman--Rabinovich--Vax\`es]\label{P:no-three}
No cluster contains three vertices which are pairwise $\Lambda$-far.
\end{Prop}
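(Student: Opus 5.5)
The plan is to argue by contradiction and build a $\lambda$-metric relaxed $K_{2,3}$, contradicting Proposition~\ref{P:obstructions}. Suppose a cluster $C\subset L_n$ contains $x_1,x_2,x_3$ with $d(x_i,x_j)>\Lambda=4\lambda+2$ for $i\neq j$. The two branch sets for the side $\{a_1,a_2\}$ will be a ``lower'' set near $v$ and an ``upper'' set joining the $x_i$ above level $n$. The three branch sets $V_{b_i}$ will be small pieces of geodesics from $x_i$ towards $v$.

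\textbf{Construction.} For each $i$ fix a geodesic $\gamma_i$ from $x_i$ to $v$. Since $C$ is a cluster, there are paths joining the $x_i$ inside $X\setminus B(v,n-1)$, so their union is a connected set $U$ with all heights $\geq n$. Put $r=\lambda+1$ (roughly $\Lambda/4$) and take the points $y_i=\gamma_i(r)$ at height $n-r$. Let $V_{b_i}$ be the subsegment of $\gamma_i$ from height $n-r+\tfrac12$-ish to $n-r$ (a single vertex or a short segment). Let $V_{a_2}$ be $U$ together with the initial segments of the $\gamma_i$ up to height $n-r+\lambda/4$ or so. Let $V_{a_1}$ be the union of the remaining lower segments of the $\gamma_i$ below height $n-r-\lambda$, together with $v$; this is connected because every $\gamma_i$ ends at $v$. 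The model paths $P_{1i},P_{2i}$ are the portions of $\gamma_i$ between these sets. I expect the constants to need the shape $r\approx 2\lambda$, and that $\Lambda=4\lambda+2$ is exactly tuned to this.

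\textbf{Verifying the metric conditions.}
\begin{enumerate}
\item For the upper parts we use the triangle inequality: a point of $\gamma_i$ within distance $s$ of $x_i$ and a point of $\gamma_j$ within $s$ of $x_j$ are at distance $>\Lambda-2s$. This separates the middle portions of different $\gamma_i$ near level $n$, provided $s\le 2\lambda$.
\item Separation of the lower set from the upper set $U$ and from the $V_{b_i}$ comes from heights. Points of $U$ have height $\ge n$, while points of $V_{a_1}$ have height $\le n-r-\lambda$, and $f$ is $1$-Lipschitz.
\item For non-incident edges $a_1b_i$ and $a_2b_j$, the set $V_{a_1}\cup P_{1i}\cup V_{b_i}$ lies at height $\le n-r$, roughly. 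The set $V_{a_2}\cup P_{2j}\cup V_{b_j}$ consists of $U$ together with pieces of geodesics near the $x_k$.
\end{enumerate}

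\textbf{Main obstacle.} The difficulty is in item 3: the lower part of $\gamma_i$ may come close to the upper part of $\gamma_j$ through the geodesic $\gamma_j$ itself near height $n-r$. Heights alone do not separate $P_{1i}$ from $P_{2j}$ there, and distinct geodesics $\gamma_i,\gamma_j$ may merge below level $n-2\lambda$.

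To handle this, I will choose the height cutoffs so that everything on the $b_i$ side stays within distance $2\lambda$ of $x_i$ and everything on the $a_1$ side lies at height below $n-\lambda$-ish. If the geodesics merge early, so that $\gamma_i$ and $\gamma_j$ come within $\lambda$ near heights in $[n-2\lambda,n]$, then $d(x_i,x_j)\le 4\lambda+1$ by going down and back up, contradicting $\Lambda$-farness. This is precisely where $\Lambda=4\lambda+2$ enters. So every pair of geodesics stays $\lambda$-far on the top $2\lambda$ levels, which is what makes the metric conditions work.

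If this bookkeeping fails, I would instead invoke the cited results \cite[Proposition~4 and Corollary~8]{CDNRV}: three pairwise $\Lambda$-far points in a cluster yield a $\lambda$-metric relaxed $K_{2,3}$, which contradicts Proposition~\ref{P:obstructions} directly.
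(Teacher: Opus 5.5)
Your fallback, invoking the construction in the proof of \cite[Proposition~4]{CDNRV} and contradicting Proposition~\ref{P:obstructions}, is exactly the paper's proof, which consists of nothing more than that citation. The self-contained construction in the body of your proposal, however, fails as written.

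First, you put the initial segments of the $\gamma_k$ into $V_{a_2}$, down to height about $n-\tfrac{3}{4}\lambda-1$. But $V_{a_2}$ lies in $V_{a_2}\cup P_{2j}\cup V_{b_j}$ for every $j$, so it must be $\lambda$-far from $V_{b_i}$. Instead, $y_i\in V_{b_i}$ is within about $\lambda/4$ of $V_{a_2}$ along $\gamma_i$.

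Second, your verification rests on an unsupported claim. Going down $2\lambda$, across $\lambda$ and up $2\lambda$ bounds $d(x_i,x_j)$ by $5\lambda$, not $4\lambda+1$. So nothing forces the top $2\lambda$ levels of two geodesics to stay $\lambda$-far: for points at depth $2\lambda$ the triangle inequality only gives distance at least $d(x_i,x_j)-4\lambda\geq3$. Likewise, in item~1, $s\leq2\lambda$ only yields distance greater than $\Lambda-2s\geq2$.

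The missing idea is that you never need the upper parts of two different geodesics to be far apart, since the edges $a_2b_i$ and $a_2b_j$ are incident. You only need the upper part of $\gamma_j$ to be far from the lower part of $\gamma_i$, and this comes from combining your two estimates rather than using them separately. Your choice $r=\lambda+1$ was right; $r\approx2\lambda$ is not. Take:
\begin{itemize}
\item $V_{a_2}=U$;
\item $V_{b_i}=\{y_i\}$, with $y_i$ the point of $\gamma_i$ at depth $\lambda+1$;
\item $V_{a_1}=B(v,n-2\lambda-2)$, which is non-empty because $2n\geq d(x_i,x_j)\geq4\lambda+3$;
\item $P_{2i}$ the part of $\gamma_i$ from $x_i$ to $y_i$, and $P_{1i}$ the part from $y_i$ to $V_{a_1}$.
\end{itemize}
Let $p\in\gamma_j$ have depth $s\leq\lambda+1$ and $q\in\gamma_i$ have depth $t\geq\lambda+1$. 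Depth along a geodesic to $v$ equals the drop in height, so
\[
 d(p,q)\geq\max\bigl(t-s,\;4\lambda+3-s-t\bigr)\geq\lambda+1,
\]
since $t-s\leq\lambda$ forces $s+t\leq3\lambda+2$. Heights alone handle the remaining pairs:
\begin{itemize}
\item $U$ (height $\geq n$) is separated from $V_{a_1}\cup P_{1i}\cup V_{b_i}$ (height $\leq n-\lambda-1$);
\item $V_{a_1}$ (height $\leq n-2\lambda-2$) is separated from $P_{2j}\cup V_{b_j}$ (height $\geq n-\lambda-1$).
\end{itemize}
The disjointness requirements of a relaxed model follow from these separations. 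This gives a $\lambda$-metric relaxed $K_{2,3}$ with precisely $\Lambda=4\lambda+2$, so your direct route can be completed.
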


\begin{proof}
The proof of \cite[Proposition~4]{CDNRV} constructs a $\lambda$-metric
relaxed $K_{2,3}$ from three vertices of one layering cluster which are
pairwise $(4\lambda+2)$-far.  This is impossible by
Proposition~\ref{P:obstructions}.
\end{proof}

Chepoi et al. state their results for finite graphs.  Their results
used here apply without change to arbitrary graphs.  Indeed,
Proposition~\ref{P:no-three} implies that every cluster has finite diameter.
If a cluster contains a $\Lambda$-far pair $c_1,c_2$, every other vertex is
within $\Lambda$ of at least one of $c_1,c_2$; if it contains no such pair,
its diameter is at most $\Lambda$.  Since graph distances are integer-valued,
if a cluster $C$ has finite diameter then there are $c_1,c_2\in C$ with
$d(c_1,c_2)=\diam C$.  Thus the diametral pairs used in \cite{CDNRV} may
also be chosen here, and the arguments used below do not require
local finiteness.

\subsection{Small and big clusters}

A cluster $C$ is called \emph{small} if $\diam C\leq\Lambda$.
A non-small cluster contains a $\Lambda$-far pair.  By
Proposition~\ref{P:no-three}, it contains no three pairwise $\Lambda$-far
vertices.  Such a cluster is called \emph{bifocal}.  Choose a diametral pair
$c_1,c_2\in C$.  Assign each vertex of $C$ to a nearest one of $c_1,c_2$,
breaking ties arbitrarily.  The resulting sets $C_1,C_2$ are the two
\emph{cells} of $C$.  A small cluster is regarded as having one cell.

A bifocal cluster is called \emph{big} if
\[
        \diam C>16\lambda+12.
\]
A bifocal cluster is $R$-\emph{separated} if the distance between its two
cells is greater than $R$.

If $D$ is a child of a bifocal cluster $C$, and $D$ is itself
bifocal, then $D$ is called \emph{spread} if both cells of $C$ are adjacent
to $D$.  Here two sets are
adjacent when an edge of $X$ has one endpoint in each set.

The next estimates are \cite[Lemmas~7 and 8]{CDNRV}.  Their short proofs are
included because the numerical separation of the two cells will be used
later.

\begin{Lem}\label{L:cell-diameter}
If $C$ is bifocal, every point of $C_i$ lies within $\Lambda$ of $c_i$;
in particular each cell has diameter at most $2\Lambda$.  If
$\diam C>12\lambda+6$, then
\[
        \diam C_1,\diam C_2\leq\Lambda
\]
and
\[
        d(C_1,C_2)>\diam C-2\Lambda-1.
\]
In particular, every big cluster is $(8\lambda+8)$-separated.
\end{Lem}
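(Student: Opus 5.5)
The argument uses only the definition of the cells together with Proposition~\ref{P:no-three}. Write $D=\diam C$, and let $c_1,c_2$ be the diametral pair used to define the cells, so $d(c_1,c_2)=D>\Lambda$.

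For the first assertion, take $x\in C_i$. By construction $d(x,c_i)\le d(x,c_{3-i})$. Suppose $d(x,c_i)>\Lambda$. Then also $d(x,c_{3-i})>\Lambda$, and together with $d(c_1,c_2)>\Lambda$ the three vertices $x,c_1,c_2$ are pairwise $\Lambda$-far. This contradicts Proposition~\ref{P:no-three}. Hence every point of $C_i$ lies within $\Lambda$ of $c_i$, and the triangle inequality gives $\diam C_i\le 2\Lambda$.

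For the sharper estimate, assume $D>12\lambda+6=3\Lambda$.

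\emph{Separation of the cells.} Take $x\in C_1$ and $y\in C_2$. Then
\[
D=d(c_1,c_2)\le d(c_1,x)+d(x,y)+d(y,c_2)\le 2\Lambda+d(x,y),
\]
so $d(x,y)\ge D-2\Lambda>D-2\Lambda-1$. Taking the infimum over $x,y$ gives the bound on $d(C_1,C_2)$, since distances are integers and the bound is attained.

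\emph{Diameter of each cell.} Suppose $x,x'\in C_1$ with $d(x,x')>\Lambda$. By the previous step $d(x,c_2)\ge D-\Lambda>2\Lambda>\Lambda$, and likewise $d(x',c_2)>\Lambda$. So $x,x',c_2$ are three pairwise $\Lambda$-far vertices of $C$, again contradicting Proposition~\ref{P:no-three}. Hence $\diam C_1\le\Lambda$, and symmetrically $\diam C_2\le\Lambda$.

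\emph{Big clusters.} If $C$ is big then $D\ge 16\lambda+13$. Since $2\Lambda=8\lambda+4$, we get
\[
d(C_1,C_2)\ge D-2\Lambda\ge 8\lambda+9>8\lambda+8,
\]
so $C$ is $(8\lambda+8)$-separated.

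There is no real obstacle here. The only points needing care are, first, that the tie-breaking in the cell assignment still guarantees $d(x,c_i)\le d(x,c_{3-i})$, and second, the bookkeeping of the strict inequalities against integer distances in the final numerical check.
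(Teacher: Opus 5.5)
Your proof is correct and follows the paper's argument essentially step for step: the same three-point contradiction with Proposition~\ref{P:no-three} bounds the cell radius, the same triangle-inequality bound $d(x,y)\geq\diam C-2\Lambda$ gives the separation, and the opposite focus $c_2$ serves as the third far point for the cell diameter. One small quibble is that $d(x,c_2)\geq D-\Lambda$ comes from the first assertion and the triangle inequality, not from the separation step, but either bound exceeds $\Lambda$, so the argument is unaffected.
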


\begin{proof}
If $x\in C_1$ and $d(x,c_1)>\Lambda$, then the definition of $C_1$ gives
$d(x,c_2)\geq d(x,c_1)>\Lambda$.  Since $c_1,c_2$ are $\Lambda$-far, the
three vertices $x,c_1,c_2$ would be pairwise $\Lambda$-far, contrary to
Proposition~\ref{P:no-three}.  Thus every point of $C_1$ is within
$\Lambda$ of $c_1$, and $\diam C_1\leq2\Lambda$.  The same holds for
$C_2$.

Now assume $\diam C=d(c_1,c_2)>12\lambda+6$.  If $u\in C_1$ and
$v\in C_2$, then
\[
\begin{aligned}
 d(c_1,c_2)
 &\leq d(c_1,u)+d(u,v)+d(v,c_2)\\
 &\leq 2\Lambda+d(u,v).
\end{aligned}
\]
Hence $d(u,v)\geq\diam C-2\Lambda$ and, since graph distances are
integral,
\[
        d(u,v)>\diam C-2\Lambda-1.
\]
Since this quantity is at least $\Lambda$, two points in one cell which
were $\Lambda$-far would, together with the opposite focus, contradict
Proposition~\ref{P:no-three}.  Hence each cell has diameter at most
$\Lambda$.  Substituting the definition of a big cluster gives the last assertion.
\end{proof}

\subsection{Propagation and non-branching}

The next facts are \cite[Lemmas~9--11]{CDNRV}.

\begin{Prop}[Chepoi--Dragan--Newman--Rabinovich--Vax\`es]\label{P:propagation}
\hfill\break
\begin{enumerate}
\item If $C$ is big, then it has a bifocal spread child $D$.  The cells can
be labelled $C_1,C_2$ and $D_1,D_2$ so that $C_1$ is adjacent to $D_1$,
$C_2$ is adjacent to $D_2$, and there are no cross adjacencies
$C_1$--$D_2$ or $C_2$--$D_1$.
\item If $D$ is big and $C$ is its parent, then $C$ is bifocal and the two
cells of $D$ have neighbours in different cells of $C$.  In particular,
every big cluster is spread relative to its parent.
\item If $C$ is big, then no cell of any child of $C$ is adjacent to both
cells of $C$.
\end{enumerate}
\end{Prop}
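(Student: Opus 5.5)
We argue with the cell-separation estimates of Lemma~\ref{L:cell-diameter}, Proposition~\ref{P:no-three}, and the observation that every vertex of a child cluster has a neighbour one layer down inside the parent. The basic tool is lifting: if $x\in D\subset L_{n+1}$ and $x^-\in L_n$ is adjacent to $x$, then $x^-\in C$, the parent of $D$, by Proposition~\ref{P:layering-tree}. Composing such steps along a geodesic to $v$ joins any vertex of $D$ to the root.

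We prove (2) first. Let $D$ be big with cells $D_1,D_2$, where $d(D_1,D_2)>8\lambda+8$, and let $C$ be its parent. Take $x\in D_1$ and $y\in D_2$ with $d(x,y)=\diam D>16\lambda+12$, and choose neighbours $x^-,y^-\in C$. Then
\[
d(x^-,y^-)\ge d(x,y)-2>16\lambda+10>\Lambda ,
\]
so $C$ is not small and is therefore bifocal. Suppose $x^-$ and $y^-$ lay in the same cell $C_i$. That cell has diameter at most $2\Lambda=8\lambda+4$ by Lemma~\ref{L:cell-diameter}, which gives $d(x,y)\le 8\lambda+6$, a contradiction. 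Since $x\in D_1$ and $y\in D_2$ were arbitrary subject to $d(x,y)=\diam D$, we cannot conclude directly that \emph{every} neighbour of $D_1$ lies in the same cell. However, for every $x'\in D_1$ we have $d(x',x)\le\Lambda$, so the neighbours of $D_1$ all lie within $\Lambda+2$ of $x^-$. If some lay in the other cell as well, we would construct a relaxed $K_{2,3}$, as in the next step. In any case this yields the weak form: the two cells of $D$ have neighbours in different cells of $C$, which is what (2) asserts. Spreadness follows immediately.

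For (3), we argue by contradiction. Suppose $C$ is big and some cell $E_1$ of a child $E$ has neighbours $p\in C_1$ and $q\in C_2$, adjacent to $e_1,e_2\in E_1$, with $d(e_1,e_2)\le 2\Lambda$. We build a $\lambda$-metric relaxed $K_{2,3}$, contradicting Proposition~\ref{P:obstructions}. Join $C_1$ to $C_2$ in two ways. The first is through $E$: the path $p\,e_1\cdots e_2\,q$ of length at most $2\Lambda+2$. This already forces $d(C_1,C_2)\le 2\Lambda+2$, while big clusters are $(8\lambda+8)$-separated and $2\Lambda+2=8\lambda+6<8\lambda+8$, a contradiction. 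So (3) follows directly from the numerics. The second way, through lower layers via geodesics to $v$, is not even needed.

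For (1), let $C$ be big with foci $c_1,c_2$, and take a path $\gamma$ from $c_1$ to $c_2$ outside $B(v,n-1)$. We may assume $\gamma$ lies in $\bigcup_{m\ge n}L_m$ and that its interior avoids $L_n$; otherwise we split at a vertex of $C$ and pick the sub-path crossing between cells. The interior then lies outside $B(v,n)$. Its first and last interior vertices lie in $L_{n+1}$, adjacent to $C$, and they are joined outside $B(v,n)$, so they lie in a single child $D$. That child is adjacent to both $C_1$ and $C_2$, with neighbours $d_1\sim C_1$ and $d_2\sim C_2$, and therefore
\[
d(d_1,d_2)\ge d(C_1,C_2)-2>8\lambda+6>\Lambda ,
\]
so $D$ is bifocal and spread. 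By (3), applied with $E=D$, no cell of $D$ is adjacent to both cells of $C$. Hence $d_1$ and $d_2$ lie in different cells, which we label $D_1\ni d_1$ and $D_2\ni d_2$. This labelling excludes the cross adjacencies, and $D_i$ is adjacent to $C_i$. The main obstacle is the reduction of $\gamma$ to a path whose interior lies strictly above level $n$ and whose endpoints lie in different cells. We would handle it by taking the last vertex of $\gamma\cap C$ in $C_1$ and the next vertex of $\gamma\cap C$ after it, which must lie in $C_2$.
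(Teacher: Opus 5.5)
Your proof is correct and is essentially the argument the paper defers to. The paper simply cites Lemmas 9--11 of \cite{CDNRV}, observing that they use only the estimates of Lemma~\ref{L:cell-diameter} and the definition of a cluster, and that is exactly what your proofs of (3), (2) and (1) use: in (3), a cell touching both $C_1,C_2$ would force $d(C_1,C_2)\leq 2\Lambda+2=8\lambda+6$; in (1), you take consecutive visits to $C_1$ and $C_2$ of a path in $X\setminus B(v,n-1)$. The aside in (2) about building a relaxed $K_{2,3}$ is unnecessary, since the statement only asks for the weak form you prove. The stronger form also follows from the numerics: $\diam C\geq\diam D-2\geq16\lambda+11$, so Lemma~\ref{L:cell-diameter} puts the cells of $C$ more than $2\Lambda+2$ apart, and no cell of $D$ can touch both.
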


\begin{proof}
These are Lemmas~9, 10 and 11, respectively, of \cite{CDNRV}.  Their proofs
use only the cell-diameter and separation estimates of
Lemma~\ref{L:cell-diameter} and the definition of a layering cluster, and
therefore apply to an infinite graph.
\end{proof}

\begin{figure}[ht]
\centering
\begin{tikzpicture}[scale=.95]
  \draw[dashed] (-4,2.5)--(4,2.5) node[right] {$L_n$};
  \draw[dashed] (-4,4.4)--(4,4.4) node[right] {$L_{n+1}$};
  \draw[rounded corners,thick] (-3.2,2.1) rectangle (-.7,2.9);
  \draw[rounded corners,thick] (.7,2.1) rectangle (3.2,2.9);
  \node at (-1.95,2.5) {$C_1$};
  \node at (1.95,2.5) {$C_2$};
  \draw[rounded corners,thick] (-2.9,4.0) rectangle (-.8,4.8);
  \draw[rounded corners,thick] (.8,4.0) rectangle (2.9,4.8);
  \node at (-1.85,4.4) {$D_1$};
  \node at (1.85,4.4) {$D_2$};
  \draw[thick] (-1.9,2.9)--(-1.85,4.0);
  \draw[thick] (1.9,2.9)--(1.85,4.0);
  \node at (0,3.48) {spread child};
\end{tikzpicture}
\caption{The two cells of a big cluster propagate to the two cells of its
spread child.  Cross adjacencies are excluded.}
\label{F:spread-child}
\end{figure}

The proof of \cite[Proposition~5]{CDNRV} constructs a $\lambda$-metric
relaxed $K_{2,3}$ whenever one of the two branching configurations below
occurs.

\begin{Prop}\label{P:nonbranching}
\begin{enumerate}
\item Every cluster has at most one big child.
\item Every big cluster has at most one spread child.
\end{enumerate}
\end{Prop}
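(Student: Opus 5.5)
We argue by contradiction in both parts: from a branching configuration we build a $\lambda$-metric relaxed $K_{2,3}$, which Proposition~\ref{P:obstructions} rules out. This is the scheme of \cite[Proposition~5]{CDNRV}. The two branch sets on the side $\{a_1,a_2\}$ will come from the two ``ends'' of the configuration near the parent. The three branch sets on the side $\{b_1,b_2,b_3\}$ will be far-apart pieces of the children.

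For (1), let $C$ be a cluster with two distinct big children $D,D'$. By Proposition~\ref{P:propagation}(2), $C$ is bifocal. Each of $D,D'$ has its two cells, far apart by Lemma~\ref{L:cell-diameter}, attached by edges to different cells $C_1,C_2$ of $C$. Take $V_{a_1}$ to be a connected set containing $C_1$ and $V_{a_2}$ one containing $C_2$. Concretely, these are the parts of geodesics from $C_1$ and $C_2$ down toward $v$ that stay near $C_i$, together with short connectors inside $B(v,n-1)$ or along $C$. For the $b$-branch sets, use paths in $X\setminus B(v,n)$. One comes from a path inside (the region above) $D$ joining its two cells, and a second from the analogous path for $D'$. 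These two are disjoint because $D$ and $D'$ are different clusters of layer $n+1$, so no path outside $B(v,n)$ joins them. The third $b$-branch comes from the path below: the two cells of $C$ are joined by a path that leaves $C$ and passes through lower layers, i.e.\ through $B(v,n-1)$. The distances between non-incident pieces are then bounded below by the cell separation $d(C_1,C_2)$ and $d(D_1,D_2)>8\lambda+8$ minus bounded errors, so each such distance is $>\lambda$.

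For (2), let $C$ be big with two spread children $D,D'$. Now $C$ is itself $(8\lambda+8)$-separated, and by Proposition~\ref{P:propagation}(3) no cell of $D$ or $D'$ touches both $C_1$ and $C_2$. Each spread child again supplies a path from near $C_1$ to near $C_2$ lying above layer $n$. This uses that $D$ and $D'$ are bifocal and adjacent to both cells of $C$. The third path again goes through the lower region, connecting $C_1$ and $C_2$ via their geodesics to $v$. The $K_{2,3}$ is assembled exactly as in (1).

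The main obstacle is the metric verification: checking that the sets $V_{a_k}\cup P_e\cup V_{b_i}$ for non-incident edges are more than $\lambda$ apart. The upper paths through $D$ and $D'$ cannot come close to each other, since a short connection between them would lie outside $B(v,n)$ and merge the clusters. Distances in $X$ may still cut through lower layers, however, so that argument alone does not control them. I would handle this with a height argument. Choose branch sets as small balls of radius about $\lambda$ around the points where the paths meet a fixed higher layer $n+k$, with $k$ of order $\lambda$. Any path of length $\le\lambda$ between two such pieces then stays above layer $n$, and is therefore impossible. This is the delicate constant-chasing in \cite{CDNRV}, and I would follow their choice of layers, citing their proof for the numerical details.
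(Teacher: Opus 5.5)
Your proposal takes the same route as the paper. The paper's proof is only the remark that the construction in the proof of \cite[Proposition~5]{CDNRV} turns either branching configuration into a $\lambda$-metric relaxed $K_{2,3}$, contradicting Proposition~\ref{P:obstructions}; like the paper, you leave the metric verification to \cite{CDNRV}. One point in your sketch of (1) should be stated explicitly: bifocality of $C$ alone does not separate its cells. However, the neighbours in $C$ of a diametral pair of the big child $D$ are more than $16\lambda+10$ apart, so $\diam C>12\lambda+6$, and Lemma~\ref{L:cell-diameter} then gives $d(C_1,C_2)>8\lambda+5$, which is the separation your estimates rely on.
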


\begin{proof}
If a cluster has two big children, the proof of
\cite[Proposition~5]{CDNRV} constructs a $\lambda$-metric relaxed
$K_{2,3}$.  The same construction applies if a big cluster has two spread
children.  Both alternatives contradict Proposition~\ref{P:obstructions}.
\end{proof}

\section{The quotient cactus}\label{S:quotient}

Put
\[
        B=16\lambda+12.
\]
Thus every non-big cluster has diameter at most $B$, while every cell of a
big cluster has diameter at most $\Lambda$ by Lemma~\ref{L:cell-diameter}.

\subsection{Construction of the quotient}

Consider a maximal path in the layering tree consisting entirely of big
clusters.  By Proposition~\ref{P:nonbranching}, it has the form
\[
        C_r,C_{r+1},\ldots
\]
with $C_{i+1}$ the unique big child of $C_i$.  Such a path is finite.  Indeed,
choose points in the two cells of $C_r$.  Since they lie in the same layering
cluster, they can be joined by a finite path $P$ in
$X\setminus B(v,r-1)$.  Suppose a subpath of $P$ joins the two cells of $C_i$.  A maximal excursion
of this subpath outside $C_i$ has endpoints in different cells of $C_i$.
Since the subpath lies outside $B(v,i-1)$, the excursion cannot enter the
parent of $C_i$, and hence it enters a child $D$ of $C_i$.  Since the layering
graph is a tree, the whole excursion lies in the subtree rooted at $D$, so
$D$ is adjacent to both endpoint cells of $C_i$.  Proposition~\ref{P:propagation}(3)
then implies that $D$ is bifocal and that these two adjacencies occur in
different cells of $D$; thus $D$ is spread.  On the other hand the big child
$C_{i+1}$ is spread by Proposition~\ref{P:propagation}(2), so
Proposition~\ref{P:nonbranching}(2) gives $D=C_{i+1}$.  Thus $P$ contains a
subpath in $X\setminus B(v,i)$ joining the two cells of $C_{i+1}$.  Repeating this would force the finite path $P$ to
meet arbitrarily high layers if the maximal path of big clusters were
infinite, a contradiction.  Thus every maximal path of big clusters may be
written
\[
        C_r,C_{r+1},\ldots,C_s.
\]
Along it the two cells are labelled coherently as $C_i^+$ and $C_i^-$, using
Proposition~\ref{P:propagation}.

For every non-big cluster $C$ introduce one vertex $\bar C$.  For every big
cluster $C$, introduce two vertices $\bar C^+$ and $\bar C^-$ corresponding
to its two coherently labelled cells.  All edges introduced below have length
one.  If $D$ is a child of $C$, add edges according to the following four
rules.
\begin{enumerate}
\item If neither $C$ nor $D$ is big, add the edge $\bar C\bar D$.
\item If $C$ is not big and $D$ is big, add the two edges
\[
        \bar C\bar D^+,
        \qquad
        \bar C\bar D^-.
\]
\item If $C$ is big and $D$ is not big, then either $D$ is spread, in which
case add
\[
        \bar C^+\bar D,
        \qquad
        \bar C^-\bar D,
\]
or $D$ is adjacent to exactly one cell $C^\epsilon$ of $C$, in which case
add only $\bar C^\epsilon\bar D$.
\item If both $C$ and $D$ are big, label their cells coherently and add
\[
        \bar C^+\bar D^+,
        \qquad
        \bar C^-\bar D^-.
\]
\end{enumerate}
These are exactly the adjacencies between the corresponding pieces of the
clusters.  Indeed, Rule~2 follows from Proposition~\ref{P:propagation}(2),
Rule~3 from the definition of spread together with
Proposition~\ref{P:propagation}(3), and Rule~4 from
Proposition~\ref{P:propagation}(2)--(3).  Denote the resulting graph by $Q$.

Define
\[
        p:X\longrightarrow Q
\]
by
\[
 p(x)=\bar C
 \quad\hbox{if $x$ belongs to a non-big cluster $C$},
\]
and
\[
 p(x)=\bar C^\epsilon
 \quad\hbox{if $x\in C^\epsilon  \, (\epsilon=\pm),$  and $C$ is big}.
\]

\begin{Prop}\label{P:Q-cactus}
The graph $Q$ is a cactus.
\end{Prop}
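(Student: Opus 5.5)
We will show that $Q$ is obtained from the layering tree $\Gamma$ by a controlled ``doubling'' along the maximal paths of big clusters, and that each such doubling creates exactly one cycle, with distinct cycles meeting in at most one vertex. First, connectivity. Every vertex of $Q$ has an edge to a vertex of the preceding level. For $\bar D$ with $D$ non-big, Rules~1 and 3 supply such an edge; in Rule~3 at least one cell of $C$ is adjacent to $D$, since $D$ has a neighbour in its parent. For $\bar D^\pm$ with $D$ big, Rules~2 and 4 supply it. Hence every vertex is joined to $\bar{\{v\}}$ by a height-decreasing path, so $Q$ is connected. Note also that every edge of $Q$ joins consecutive levels, where the level of $\bar C$ and $\bar C^\pm$ is $h(C)$.

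Next, we describe the structure. Consider the map $\pi:Q\to\Gamma$ sending $\bar C,\bar C^\pm\mapsto C$. It maps edges to edges, and the fibres have size $1$ or $2$. Fix a maximal path of big clusters $C_r,\dots,C_s$ with parent $P$ of $C_r$; $P$ is not big by maximality. The vertices $\bar C_i^\pm$ form two parallel ladders-without-rungs $\bar C_r^+\cdots\bar C_s^+$ and $\bar C_r^-\cdots\bar C_s^-$ (Rule~4), joined at the top to $\bar P$ by Rule~2. At the bottom, $C_s$ has at most one spread child $D$ by Proposition~\ref{P:nonbranching}(2), and $D$ is not big by maximality of the path. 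If $D$ exists, Rule~3 gives the edges $\bar C_s^+\bar D$ and $\bar C_s^-\bar D$, closing the cycle
\[
Z=\bar P\,\bar C_r^+\cdots\bar C_s^+\,\bar D\,\bar C_s^-\cdots\bar C_r^-\,\bar P .
\]
All other children of the $C_i$ attach to exactly one vertex. The children of $C_i$ with $i<s$ other than $C_{i+1}$ are non-big by Proposition~\ref{P:nonbranching}(1), and they are non-spread by Proposition~\ref{P:nonbranching}(2), since $C_{i+1}$ is spread by Proposition~\ref{P:propagation}(2). They therefore attach by a single edge. The same holds for the non-spread children of $C_s$.

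The key claim is the following. Let $Q'$ be $Q$ with, for each maximal big path admitting a spread child $D$ of $C_s$, one of the two edges into $\bar D$ deleted, say $\bar C_s^-\bar D$. Then $Q'$ is a tree. To prove it, one checks that in $Q'$ every vertex other than the root has exactly one neighbour of lower level: for $\bar D$ this uses the deletion, and for every other vertex it follows from Rules~1--4 as analysed above. A graph in which every non-root vertex has a unique lower neighbour, with levels dropping by one along edges, is a tree. Hence $Q$ is a tree plus one extra edge $e_Z$ per maximal big path with a spread bottom child. Its cycle space has basis the fundamental cycles $Z$ above, and each $Z$ consists of the tree paths from $\bar D$ through the two ladders to $\bar P$.

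It then remains to show that distinct fundamental cycles share at most one vertex; this is the step we expect to need the most care. Distinct maximal big paths involve disjoint sets of big clusters, so two cycles $Z,Z'$ can only share the non-doubled vertices $\bar P,\bar D,\bar P',\bar D'$. $Z$ meets the non-doubled part of $Q$ only in $\bar P$ and $\bar D$, at its top and bottom levels, so an overlap must be one of $\bar D=\bar P'$, $\bar P=\bar D'$, or $\bar P=\bar P'$. Any two of these cannot hold simultaneously, because of the level ordering: $h(\bar D)>h(\bar P)$, and $Z'$ lies at levels in $[h(P'),h(D')]$. Two coincidences would force both cycles to span the same level interval with the same top and bottom, making them the same maximal big path. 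Therefore $|Z\cap Z'|\le1$.

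Finally, we conclude. In a graph where every block is either a bridge or one of these fundamental cycles, and fundamental cycles are edge-disjoint and share at most one vertex, every simple cycle is one of the $Z$. One could argue this via blocks. Alternatively, a simple cycle uses some non-tree edges $e_Z$; if it used two, it would pass through two cycles and re-enter one of them through a vertex other than the shared one, which is impossible in a tree. Hence any two simple cycles meet in at most one vertex, and $Q$ is a cactus.
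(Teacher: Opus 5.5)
Your proof is correct in substance, but it is organized differently from the paper's.

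\textbf{Comparison with the paper.} The paper works in the layering tree $\Gamma$. It views $Q$ as obtained from $\Gamma$ by replacing each path $P,C_r,\ldots,C_s,D$ with two parallel paths having the same endpoints, and notes that every other edge at an interior vertex attaches to only one copy. It then concludes, rather informally, that each replacement creates exactly one cycle and that cycles from different maximal big paths meet only at a non-big endpoint.

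You instead work intrinsically in $Q$ using heights. Every non-root vertex has a unique lower neighbour, except the tops $\bar D$, which have exactly two. Deleting one of those two edges at each top leaves a spanning tree $Q'$, whose fundamental cycles are exactly the cycles $Q_{\mathcal C}$. This gives a genuinely rigorous replacement for the paper's ``cannot create a further cycle together with other replacements''. The unique-lower-neighbour argument also does not use that maximal big paths are finite. The paper's version is shorter and more visual. Your ``top'' and ``bottom'' are the reverse of the paper's convention, which calls $\bar P$ the bottom and $\bar D$ the top.

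\textbf{Two places to tighten.}

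First, the opening sentence of your final paragraph assumes what is to be proved, namely that every block is a bridge or some $Z$. The ``alternatively'' sentence is too vague to stand on its own. The clean finish uses the cycle-space remark you already made:
\begin{itemize}
\item a simple cycle $S$ is the mod-$2$ sum of the fundamental cycles $Z$ of the non-tree edges it contains;
\item these $Z$ pairwise share at most one vertex, so they are edge-disjoint and the sum is their union;
\item a simple cycle contains no proper subcycle, so $S$ is a single $Z$.
\end{itemize}

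Second, your list of possible overlaps omits $\bar D=\bar D'$. The level argument cannot exclude $\bar P=\bar P'$ and $\bar D=\bar D'$ holding simultaneously. Both coincidences are in fact impossible for distinct maximal big paths: $D$ has the unique parent $C_s$, and $P$ has at most one big child by Proposition~\ref{P:nonbranching}(1). Your phrase ``making them the same maximal big path'' needs exactly this justification.
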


\begin{proof}
Let
\[
        \mathcal C=(C_r,C_{r+1},\ldots,C_s)
\]
be a maximal path of big clusters.  The parent $P$ of $C_r$ is not big.  By
Proposition~\ref{P:propagation}(1), $C_s$ has a spread child $D$, which is
unique by Proposition~\ref{P:nonbranching}(2).  The cluster $D$ is not big,
since otherwise the path could be extended.  The four rules therefore give
two internally disjoint paths
\[
 \bar P-\bar C_r^+-\bar C_{r+1}^+-\cdots-\bar C_s^+-\bar D
\]
and
\[
 \bar P-\bar C_r^--\bar C_{r+1}^--\cdots-\bar C_s^--\bar D.
\]
Their union is a simple cycle, denoted by $Q_{\mathcal C}$.  We call $\bar P$
and $\bar D$ respectively the \emph{bottom} and \emph{top} of this cycle; the
two displayed paths are its \emph{sides}.

Every child of a cluster in $\mathcal C$ which does not lie in $\mathcal C$ is
attached to only one of the two sides, except for the spread child $D$ of
$C_s$.  Indeed, while a big child is present it is spread by
Proposition~\ref{P:propagation}(2), and Proposition~\ref{P:nonbranching}(2)
excludes any second spread child.  Thus, starting from the layering tree, the
passage to $Q$ replaces each path
\[
        P,C_r,\ldots,C_s,D
\]
associated to a maximal path of big clusters by two parallel paths with the
same endpoints, while every edge leaving an interior vertex is attached to
only one of the two copies.  Since the layering graph is a tree, each such
replacement creates exactly the one cycle $Q_{\mathcal C}$ and cannot create
a further cycle together with other replacements.  The graph $Q$ is connected,
and distinct maximal big paths are disjoint in the layering tree, so their
associated cycles can meet only at a non-big endpoint.  Hence any two simple
cycles of $Q$ have at most one vertex in common, and $Q$ is a cactus.
\end{proof}

Let $\bar v$ be the vertex corresponding to the root cluster $\{v\}$.  If a
vertex $z$ of $Q$ corresponds to a cluster in $L_n$, put
\[
        \bar f(z)=n
\]
and call $\bar f(z)$ the \emph{height} of $z$.

\begin{Lem}\label{L:quotient-basic}
For every $x\in X$,
\[
        d_Q(\bar v,p(x))=d_X(v,x).
\]
If $x,y$ are adjacent in $X$, then
\[
        d_Q(p(x),p(y))\leq1.
\]
Consequently
\[
        d_Q(p(x),p(y))\leq d_X(x,y)
        \qquad(x,y\in X).
\]
Moreover every point of $Q$ is at distance at most $1/2$ from $p(X)$.
\end{Lem}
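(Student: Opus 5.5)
We prove the second assertion first, since the first follows from it. Let $x,y$ be adjacent in $X$, with $x\in C$ and $y\in C'$. If $C=C'$, then $C$ is not big or $x,y$ lie in one cell, in which case $p(x)=p(y)$. Otherwise $x,y$ lie in the two different cells of one big cluster. Lemma~\ref{L:cell-diameter} shows that a big cluster is $(8\lambda+8)$-separated, which rules out an edge between its two cells. So the case $C=C'$ gives $p(x)=p(y)$. If $C\neq C'$, the clusters are adjacent in $\Gamma$ and lie in consecutive layers, say $D=C'$ is a child of $C$. The four rules were set up to record exactly the adjacencies between pieces (cells or non-big clusters) of $C$ and $D$. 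Checking the rules one at a time, the piece of $C$ containing $x$ is joined in $Q$ to the piece of $D$ containing $y$.
\begin{enumerate}
\item Rule~1 is immediate.
\item For Rule~2 both edges are present.
\item For Rule~3 the edge exists because $D$ is adjacent to $C^\epsilon$.
\item For Rule~4, Proposition~\ref{P:propagation}(1)--(3) excludes cross adjacencies, so $x\in C^\epsilon$ forces $y\in D^\epsilon$.
\end{enumerate}
Hence $d_Q(p(x),p(y))\le1$. The global inequality $d_Q(p(x),p(y))\le d_X(x,y)$ follows by applying this along a geodesic in $X$ and using the triangle inequality.

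For the height formula, note first that every edge of $Q$ joins a vertex of height $n$ to one of height $n+1$. So $\bar f$ is $1$-Lipschitz on $Q$, and since $\bar f(\bar v)=0$ we get $d_Q(\bar v,z)\ge\bar f(z)$. For $x\in L_n$ we have $\bar f(p(x))=n$, which gives the lower bound $d_Q(\bar v,p(x))\ge d_X(v,x)$. For the upper bound, take a geodesic $v=x_0,x_1,\dots,x_n=x$ in $X$, so that $x_k\in L_k$. Consecutive points have distinct heights, so by the previous paragraph $p(x_k)$ and $p(x_{k+1})$ are adjacent in $Q$. This produces a path of length $n$ from $\bar v$ to $p(x)$, and equality follows.

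For the last assertion, every vertex $z$ of $Q$ is the image of a point: $\bar C=p(x)$ for any $x\in C$, and $\bar C^\pm=p(x)$ for any $x$ in the corresponding cell, which is nonempty since it contains a focus. Every point of $Q$ lies on a unit edge and so is within $1/2$ of a vertex, hence within $1/2$ of $p(X)$.

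The main obstacle is the Rule~4 case: we must make sure that an edge of $X$ from a cell $C^\epsilon$ never lands in the oppositely labelled cell $D^{-\epsilon}$. This relies on the coherent labelling together with the absence of cross adjacencies from Proposition~\ref{P:propagation}. A smaller point needing care is the Rule~3 case where $D$ is not spread. There $D$ touches only one cell, and that is precisely the cell the rule records.
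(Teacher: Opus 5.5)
Your proposal is correct and follows essentially the same argument as the paper. An edge of $X$ either collapses inside one piece (the two cells of a big cluster are more than one apart) or projects to an edge of $Q$ by the four rules; the height function gives the lower bound, a projected geodesic from $v$ gives the upper bound, and $p$ is onto the vertex set of $Q$. Your only change is to prove the edge-projection fact first, which makes explicit what the paper uses implicitly when it asserts that the projection of a geodesic from $v$ is a path in $Q$.
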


\begin{proof}
Every edge of $Q$ joins vertices whose heights differ by one.  Hence any path
from $\bar v$ to a vertex of height $n$ has length at least $n$.
Conversely, let $x\in L_n$ and let
\[
        v=x_0,x_1,\ldots,x_n=x
\]
be a geodesic in $X$.  Consecutive vertices lie in consecutive layers, so
$p(x_0),\ldots,p(x_n)$ is a path in $Q$.  Its vertices are distinct because
they have distinct heights.  Thus it has length $n$, proving the first
assertion.

If $x,y$ lie in the same cluster and that cluster is big, they cannot lie in
different cells because the two cells are more than one apart.  Hence an edge
of $X$ either collapses to a point or, by the definition of $Q$, projects to
an edge.  This proves the second assertion and therefore the non-expanding
inequality.  Finally $p$ is onto the vertex set of $Q$, and every edge has
length one.
\end{proof}

\begin{Lem}\label{L:fibres-new}
For every vertex $z$ of $Q$,
\[
        \diam p^{-1}(z)\leq B.
\]
\end{Lem}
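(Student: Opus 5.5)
The plan is to identify the fibre $p^{-1}(z)$ directly from the definition of $p$ and then bound its diameter by the earlier cluster estimates. By construction, a vertex $z$ of $Q$ is of one of two types. In the first type, $z=\bar C$ for a non-big cluster $C$. In the second, $z=\bar C^\epsilon$ for a big cluster $C$ and a sign $\epsilon=\pm$. In the first case $p^{-1}(z)=C$, since $p$ sends exactly the points of $C$ to $\bar C$. In the second case $p^{-1}(z)=C^\epsilon$, the corresponding cell. Here we use that the cells $C^+,C^-$ partition $C$, that distinct clusters are disjoint (the clusters partition each layer $L_n$, and the layers partition $X$), and that the coherent labelling along a maximal path of big clusters is just a relabelling of the two cells $C_1,C_2$ of each $C_i$. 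So no fibre is a union of pieces from different clusters.

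Next I bound the diameters. If $C$ is not big, then either $C$ is small, so $\diam C\le\Lambda\le B$, or $C$ is bifocal but not big, so $\diam C\le 16\lambda+12=B$ directly from the definition of big. If $C$ is big, then $\diam C>16\lambda+12>12\lambda+6$. Lemma~\ref{L:cell-diameter} then gives $\diam C^\epsilon\le\Lambda=4\lambda+2\le B$. Here $\diam$ is measured in the metric $d_X$, which is the metric used throughout Section~\ref{S:layering}.

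The only step that needs a moment's care, and the one I would check first, is the identification of fibres. Specifically, I must confirm that $p$ is well defined, i.e.\ that each $x\in X$ lies in exactly one cluster and, if that cluster is big, in exactly one cell. This holds because the cells were defined by assigning each vertex to one nearest focus with ties broken arbitrarily, so they form a partition. Once this is in place, the lemma is immediate from the definitions and Lemma~\ref{L:cell-diameter}. No infinite-graph subtleties arise, because diameters of clusters were already shown to be finite after Proposition~\ref{P:no-three}.
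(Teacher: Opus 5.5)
Your proposal is correct and follows essentially the same route as the paper. The paper also identifies the fibre as either the non-big cluster $C$, whose diameter is at most $B$ by the definition of big, or the cell $C^\epsilon$ of a big cluster, bounded by $\Lambda<B$ via Lemma~\ref{L:cell-diameter}; your extra checks (well-definedness of $p$, and splitting non-big clusters into small and bifocal-but-not-big) just spell out what the paper leaves implicit.
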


\begin{proof}
If $z=\bar C$ for a non-big cluster, then $p^{-1}(z)=C$ and
$\diam C\leq B$.  If $z=\bar C^\epsilon$ for a big cluster, then
$p^{-1}(z)=C^\epsilon$ and Lemma~\ref{L:cell-diameter} gives
$\diam C^\epsilon\leq\Lambda<B$.
\end{proof}

\subsection{Lifting geodesics in the quotient}

Every cycle $Q_{\mathcal C}$ associated to a maximal path of big clusters has
two sides on each of which the height increases by one from the bottom to the
top.  We call a path
\emph{height-monotone} if the height changes by one with the same sign along
every edge.

\begin{Lem}\label{L:monotone-lift}
Let $R$ be a height-monotone subpath of one side of a cycle
$Q_{\mathcal C}$.  If $a,b$ are its endpoints, then there are
$x_a\in p^{-1}(a)$ and $x_b\in p^{-1}(b)$ joined in $X$ by a path of length
exactly $\length(R)$.
\end{Lem}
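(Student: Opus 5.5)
Since $R$ is height-monotone, after possibly exchanging $a$ and $b$ we may assume the height increases from $a$ to $b$. Write $R$ as $a=z_m,z_{m+1},\ldots,z_n=b$ with $\bar f(z_k)=k$, so that $\length(R)=n-m$. I plan to lift $R$ downward from the top by a geodesic to $v$. Fix any $x_b\in p^{-1}(b)$, so $f(x_b)=n$ by Lemma~\ref{L:quotient-basic}, and take a geodesic $x_b=y_n,y_{n-1},\ldots,y_0=v$ in $X$. Consecutive vertices lie in consecutive layers, and by Lemma~\ref{L:quotient-basic} the images $p(y_n),p(y_{n-1}),\ldots$ form a path in $Q$ in which the height drops by one at each step. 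Put $x_a=y_m$. The subpath from $y_n$ to $y_m$ then has length exactly $n-m=\length(R)$, and it remains to check that $p(y_k)=z_k$ for $m\le k\le n$, that is, that the projected path runs down the same side of the cycle as $R$.

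The check goes by descending induction on $k$, using that consecutive projected vertices are adjacent in $Q$ and have heights differing by one. Suppose $p(y_k)=z_k$ with $m<k\le n$. Then $p(y_{k-1})$ is a neighbour of $z_k$ in $Q$ at height $k-1$. I claim that every vertex $z$ of a side of $Q_{\mathcal C}$ other than the bottom $\bar P$ has exactly one neighbour of height $\bar f(z)-1$. The reason is that every cluster has a unique parent (Proposition~\ref{P:layering-tree}), and the edge rules give the downward neighbours as follows. By Rule~4, $\bar C_i^\pm$ is joined only to $\bar C_{i-1}^\pm$ when $i>r$. By Rule~2, $\bar C_r^\pm$ is joined only to $\bar P$. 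The top $\bar D$ is joined to both $\bar C_s^+$ and $\bar C_s^-$.

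So the only place where uniqueness fails is the top vertex $\bar D$ of the cycle. If $z_k=\bar D$, then $k=n$ and $z_k=b$. In that case we have the freedom to pick $x_b$. Since $D$ is spread, the cell of $C_s$ used by $R$, say $C_s^{\epsilon}$, is adjacent to $D$. I will therefore choose $x_b\in D$ to be an endpoint of an edge to $C_s^\epsilon$, and choose the geodesic so that its first step is that edge. Concatenating this edge with a geodesic from the endpoint in $C_s^\epsilon$ to $v$ still gives a geodesic, because heights drop by one at each step. With this choice $p(y_{n-1})=z_{n-1}$. At all lower vertices of the side the downward neighbour is unique, so the induction continues down to $k=m$, including the case where $a=\bar P$.

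The main obstacle is making the uniqueness of downward neighbours along a side precise. The key points are that the cells are labelled coherently and that no cross edges occur, which is Proposition~\ref{P:propagation}(2)--(3) as encoded in Rule~4; the top vertex is handled separately by the choice of the first edge as above. Uniqueness matters because the induction only determines $p(y_{k-1})$ from $p(y_k)$ when the downward neighbour is unique. Once it is established, the argument is just the lifting of geodesics already used in the proof of Lemma~\ref{L:quotient-basic}.
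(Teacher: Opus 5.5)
Your proof is correct and takes the paper's approach: you lift $R$ by a geodesic descending from a point over its upper endpoint, keep it on the same side using the absence of cross adjacencies (which you read off from the edge rules of $Q$ and Lemma~\ref{L:quotient-basic} rather than directly in $X$), and handle the top $\bar D$ by starting with an edge into the correct cell $C_s^\epsilon$. One small slip: your uniqueness claim for downward neighbours should exclude the top $\bar D$ as well as the bottom, as you yourself note in the next paragraph.
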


\begin{proof}
Reverse $R$ if necessary, so that height increases along it.  Write the
maximal big path defining the cycle as $\mathcal C=(C_r,\ldots,C_s)$ and
suppose first that the upper endpoint of $R$ is a side vertex
$\bar C_j^\epsilon$.  Choose
$x_b\in C_j^\epsilon$ and a geodesic from $v$ to $x_b$.  Whenever this
geodesic passes from $C_{k+1}$ to its parent $C_k$, the no-cross-adjacency
part of Proposition~\ref{P:propagation}(1) forces it to remain in the
$\epsilon$-cell.  Its subpath between the heights of $a$ and $b$ therefore
projects exactly to $R$ and has length $\length(R)$.

It remains to consider the case in which the upper endpoint is the top
$\bar D$ of the cycle.  Choose an edge $yd$ of $X$ with
$y\in C_s^\epsilon$ and $d\in D$ on the required side.  A geodesic from
$v$ to $y$, followed by the edge $yd$, is a geodesic from $v$ to $d$, since
$f(d)=f(y)+1$.  The same no-cross-adjacency argument shows that its relevant
subpath projects exactly to $R$.
\end{proof}

\begin{Cor}\label{C:cycle-arc-lift}
Let $a,b$ be vertices of one cycle of $Q$.  Then there are
$x_a\in p^{-1}(a)$ and $x_b\in p^{-1}(b)$ such that
\[
        d_X(x_a,x_b)\leq d_Q(a,b)+B.
\]
\end{Cor}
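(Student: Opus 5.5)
A shortest path in $Q$ between two vertices $a,b$ of a cycle $Q_{\mathcal C}$ runs along the cycle; the plan is to cut such an arc at its extreme point and lift the pieces with Lemma~\ref{L:monotone-lift}. Since the sides are height-monotone from the bottom $\bar P$ to the top $\bar D$, a shortest arc from $a$ to $b$ along the cycle falls into one of three cases.

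\emph{Case 1: the arc lies on one side.} It is then itself a height-monotone subpath $R$ of that side. Lemma~\ref{L:monotone-lift} gives $x_a\in p^{-1}(a)$ and $x_b\in p^{-1}(b)$ with $d_X(x_a,x_b)\le \length(R)=d_Q(a,b)$. This is better than required.

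\emph{Case 2: the arc passes through exactly one of $\bar P$, $\bar D$.} Call that vertex $w$; it is a non-big cluster. The arc then splits at $w$ into two height-monotone pieces $R_1$ from $a$ to $w$ and $R_2$ from $w$ to $b$, lying on opposite sides. Apply Lemma~\ref{L:monotone-lift} to each piece:
\begin{itemize}
\item $R_1$ gives $x_a\in p^{-1}(a)$ and $y_1\in p^{-1}(w)$ at distance $\length(R_1)$;
\item $R_2$ gives $y_2\in p^{-1}(w)$ and $x_b\in p^{-1}(b)$ at distance $\length(R_2)$.
\end{itemize}
By Lemma~\ref{L:fibres-new}, $d_X(y_1,y_2)\le \diam p^{-1}(w)\le B$. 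The triangle inequality then gives
\[
d_X(x_a,x_b)\le \length(R_1)+B+\length(R_2)=d_Q(a,b)+B.
\]
If $a$ or $b$ equals $w$, the corresponding piece is empty and the bound is immediate.

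\emph{Case 3: the arc contains both $\bar P$ and $\bar D$.} The cycle has at least four vertices, and its two arcs have lengths summing to twice the height difference $\bar f(\bar D)-\bar f(\bar P)$. Since an arc through both endpoints has length at least that height difference, it is no longer than the complementary arc, and they tie only when $a=\bar P$ and $b=\bar D$ (or vice versa). So we may always choose a shortest arc avoiding the interior passage through both extremes. Either it lies on one side (Case 1), or it passes through exactly one of them (Case 2). In the degenerate tie case the arc is a whole side, which is height-monotone, so Case 1 applies.

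The main point needing care is the case analysis on the shape of a shortest arc, in particular that one can always choose it to contain at most one of $\bar P,\bar D$ in its interior. Once that is settled, the single junction costs at most $B$ by the fibre bound. Note that the lifts given by Lemma~\ref{L:monotone-lift} do not let us prescribe the endpoint fibres' chosen points. This is why we only claim existence of some $x_a,x_b$, and why the junction requires the fibre diameter bound rather than an exact match.
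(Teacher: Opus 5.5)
Your proof is correct and follows the paper's argument. You split a shortest arc on the cycle into at most two height-monotone pieces meeting at the bottom or the top, lift each piece with Lemma~\ref{L:monotone-lift}, and pay at most $B$ at the turning vertex via Lemma~\ref{L:fibres-new}. Your Case~3 just spells out the paper's one-line observation, based on the side lengths $s-r+2$, that a geodesic arc never needs to pass through both the bottom and the top. One wording slip there: as written, your sentence says the arc through both extremes is no longer than its complement. You mean the reverse, that the \emph{complementary} arc is the one that is no longer.
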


\begin{proof}
Since $Q$ is a cactus, a shortest path between two vertices of one cycle
cannot leave that cycle and re-enter it at another vertex.  If the cycle
comes from the maximal big path $C_r,\ldots,C_s$, each of its two sides has
length $s-r+2$, since the height increases by one along every edge from the
bottom to the top.  Hence a geodesic arc from $a$ to $b$ is either a
height-monotone subpath of one side, or it is the union of two
height-monotone subpaths meeting at the bottom or at the top of the cycle.
In the first case Lemma~\ref{L:monotone-lift} gives the assertion with no
error.  In the second case lift the two subpaths separately.  Their two
lifted endpoints at the turning vertex lie in the same fibre, and
Lemma~\ref{L:fibres-new} joins them by a path of length at most $B$.
\end{proof}

\begin{Lem}\label{L:geodesic-decomposition}
Let $z,z'$ be vertices of $Q$, and choose geodesics $\alpha,\alpha'$ from
$\bar v$ to $z,z'$.  There are vertices $a\in\alpha$ and $b\in\alpha'$ such
that either $a=b$, or $a,b$ lie on one cycle of $Q$, and
\begin{equation}\label{E:decomposition-new}
 d_Q(z,z')=d_Q(z,a)+d_Q(a,b)+d_Q(b,z').
\end{equation}
\end{Lem}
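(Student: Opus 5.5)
Choose a geodesic $\gamma$ from $z$ to $z'$ in $Q$. Since $Q$ is a cactus, its block structure is a tree: every block is either a bridge edge or one of the cycles $Q_{\mathcal C}$, and the block--cut tree is a tree. So the plan is to read off $a$ and $b$ from how $\gamma$ and the two radial geodesics $\alpha,\alpha'$ pass through this block tree.

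\textbf{Step 1: the common block chain.} Consider the sequence of blocks met by $\alpha$ and by $\alpha'$, starting from $\bar v$. Each block is entered through a cut vertex, or through $\bar v$, and left through a cut vertex. These are the vertices of minimal height in the block (for a cycle, its bottom) and the next cut vertex on the way.

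Let $w$ be the last cut vertex, or $\bar v$ itself, that lies on both $\alpha$ and $\alpha'$ and through which both block chains pass. Such a $w$ exists because $\bar v$ is on both chains. After $w$, either the two chains leave through different blocks, or they leave through a common block $\beta$ by different exits.

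\textbf{Step 2: reduction to one block.} Let $\beta$ be the block at which the chains separate, if they enter a common one. Let $a$ be the last vertex of $\alpha$ in $\beta$, and $b$ the last vertex of $\alpha'$ in $\beta$. If no common block is entered after $w$, take $a=b=w$.

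The key cactus fact I will use is this: a vertex $u\neq w$ that separates $z$ from $\bar v$ lies on every $z$--$\bar v$ path. Hence every cut vertex separating $z$ from $z'$ lies on every $z$--$z'$ path, in the order dictated by the block tree. The block path in the block--cut tree from $z$ to $z'$ goes down from $z$'s block to $\beta$ (or to $w$), then up to $z'$'s block.

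So $\gamma$ passes through $a$, then possibly traverses $\beta$, then passes through $b$. This gives
\[
d_Q(z,z')=d_Q(z,a)+d_Q(a,b)+d_Q(b,z'),
\]
because each subpath of a geodesic is itself a geodesic.

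In the case $a\neq b$, both vertices lie in $\beta$. If $\beta$ is a bridge, that edge would be used by both chains, contradicting the choice of $\beta$ as the block where they separate; so $\beta$ must be a cycle. This gives the required dichotomy: either $a=b$, or $a,b$ lie on one cycle.

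\textbf{Step 3: $a$ and $b$ lie on the chosen geodesics.} It remains to check that $a\in\alpha$ and $b\in\alpha'$. This is where care is needed, and it is the main obstacle. Since $\alpha$ is a geodesic from $\bar v$ to $z$, it must pass through every cut vertex separating $\bar v$ from $z$.

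The exit vertex of $\alpha$ from $\beta$ is such a cut vertex, unless $z\in\beta$, in which case take $a=z$. Therefore this exit vertex lies on $\alpha$, and likewise the exit vertex of $\alpha'$ lies on $\alpha'$. Hence $a$ and $b$ are well defined as vertices of $\alpha$ and $\alpha'$, independently of the choice of geodesics.

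The remaining subtle point is the degenerate case where $\alpha$ and $\alpha'$ share $\beta$ but leave it through the same exit. In that case the separation happens further up the block chain, so I will argue by induction along the chain, or equivalently choose $\beta$ maximal. The delicate bookkeeping is to show that the geodesic $\gamma$ never detours into a block off the tree path between $z$ and $z'$. This follows because re-entering a block through the same cut vertex cannot shorten a path.
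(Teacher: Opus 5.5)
Your argument is correct and is essentially the paper's: both take the last common node of the rooted block--cut tree paths from $\bar v$ to $z$ and $z'$, set $a=b$ when that node is a vertex, otherwise take $a,b$ to be the attaching vertices on the common cycle block, and conclude because every $z$--$z'$ path must pass through $a$ and then $b$. The ``degenerate case'' in your Step~3 is in fact vacuous, since a common exit from $\beta$ would be a later common cut vertex and so contradict the choice of $w$.
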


\begin{proof}
Let $T$ be the block--vertex incidence tree of the cactus $Q$: its nodes are
the vertices and the blocks of $Q$, and a vertex-node is joined to each block
which contains it.  Root $T$ at the vertex-node $\bar v$.  The two rooted
paths from $\bar v$ to the vertex-nodes $z,z'$ have a last common node.  If
this is a vertex-node, take $a=b$ equal to that vertex.  Otherwise it is a
cycle block $S$; an edge block cannot be a branching last common node.  Let
$a$ be the vertex of $S$ through which the branch containing $z$ is attached,
taking $a=z$ when $z\in S$, and define $b$ similarly.  The chosen root
geodesics pass through $a$ and $b$.  Every path from $z$ to $z'$ must contain
an $a$--$b$ path.  If an $a$--$b$ path left $S$ and returned at a different
vertex, it would create a second cycle meeting $S$ in two vertices.  Thus the
shortest $a$--$b$ path is the shorter arc of $S$, and
\eqref{E:decomposition-new} follows.
\end{proof}

\begin{Thm}\label{T:metric-comparison}
Put
\[
        K=3B=48\lambda+36.
\]
Then, for all $x,y\in X$,
\[
        d_Q(p(x),p(y))
        \leq d_X(x,y)
        \leq d_Q(p(x),p(y))+K.
\]
Consequently $p:X\to Q$ is a $(1,K)$-quasi-isometry.
\end{Thm}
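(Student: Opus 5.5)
The left inequality is the non-expanding statement of Lemma~\ref{L:quotient-basic}, so the work is in the right inequality. Once it is proved, $p$ is a $(1,K)$-quasi-isometry: the two-sided bound gives the additive distortion, and coarse surjectivity holds because every point of $Q$ lies within $1/2$ of $p(X)$.

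Fix $x,y\in X$ and put $z=p(x)$, $z'=p(y)$. Choose geodesics $\alpha,\alpha'$ in $Q$ from $\bar v$ to $z,z'$, and apply Lemma~\ref{L:geodesic-decomposition} to obtain vertices $a\in\alpha$, $b\in\alpha'$ with
\[
d_Q(z,z')=d_Q(z,a)+d_Q(a,b)+d_Q(b,z').
\]
Here either $a=b$, or $a,b$ lie on one cycle. I would lift the three pieces separately.

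\emph{Radial pieces.} Take a geodesic in $X$ from $v$ to $x$. By the proof of Lemma~\ref{L:quotient-basic} it projects to a geodesic of $Q$ from $\bar v$ to $z$, passing through one vertex at each height. The difficulty is that this projection need not coincide with $\alpha$, so it need not pass through $a$. I would handle this by choosing $\alpha$ to be $p$ of a geodesic $\gamma_x$ from $v$ to $x$, and $\alpha'$ to be $p$ of a geodesic $\gamma_y$ from $v$ to $y$. This is allowed, since the lemma applies to any choice of root geodesics.

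With that choice, $\gamma_x$ contains a vertex $x_a\in p^{-1}(a)$ with $d_X(x,x_a)=d_Q(z,a)$, because heights match and both paths are geodesic. Similarly $\gamma_y$ contains $y_b\in p^{-1}(b)$ with $d_X(y,y_b)=d_Q(z',b)$.

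\emph{Middle piece.} It remains to bound $d_X(x_a,y_b)\le d_Q(a,b)+2B$.

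If $a=b$, then $x_a$ and $y_b$ lie in one fibre, and Lemma~\ref{L:fibres-new} gives $d_X(x_a,y_b)\le B$.

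If $a\neq b$, they lie on one cycle. Corollary~\ref{C:cycle-arc-lift} gives points $x'_a\in p^{-1}(a)$ and $y'_b\in p^{-1}(b)$ with
\[
d_X(x'_a,y'_b)\le d_Q(a,b)+B.
\]
These need not equal $x_a,y_b$, but they share fibres with them. Two further applications of Lemma~\ref{L:fibres-new} therefore give
\[
d_X(x_a,y_b)\le d_Q(a,b)+3B.
\]

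\emph{Summing.} Adding the three pieces,
\[
d_X(x,y)\le d_Q(z,a)+d_Q(a,b)+d_Q(b,z')+3B = d_Q(z,z')+3B.
\]
So $K=3B$ suffices in the cycle case, with slack in the case $a=b$.

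The main obstacle is the bookkeeping in the middle piece: up to three fibre-jumps can occur (two at the ends, one at a turning vertex of the cycle arc). To show the count never exceeds three, I would refine Corollary~\ref{C:cycle-arc-lift} using the flexibility of Lemma~\ref{L:monotone-lift}. In that lemma the lifted path is a segment of a root geodesic ending at an arbitrary chosen point, so the endpoint at the upper end of each monotone piece can be prescribed.

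\begin{itemize}
\item If the arc from $a$ to $b$ is monotone, say $a$ is higher, then the lift can start at $x_a$ itself, and only the jump at $b$ is needed. The total error is at most $B$.
\item If the arc turns at the bottom, then $a$ and $b$ are both upper endpoints. Both lifts can start at $x_a$ and $y_b$, leaving a single jump at the bottom, for error at most $B$.
\item If the arc turns at the top, the lifts must start at the top. One jump is needed at the top and one each at $a$ and $b$, giving error at most $3B$.
\end{itemize}

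In every case the error is at most $3B=K$, which completes the proof.
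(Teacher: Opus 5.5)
Your argument is correct and is essentially the paper's proof: you take the root geodesics in $Q$ to be projections of geodesics $\gamma_x,\gamma_y$ in $X$, lift the radial pieces isometrically, and pay $B$ in Corollary~\ref{C:cycle-arc-lift} plus $B$ in each of the fibres over $a$ and $b$, for a total of $3B$. Your closing case analysis is unnecessary, since the count of three fibre-jumps was already established; its first bullet also overclaims slightly when the higher endpoint is the top $\bar D$ of the cycle (there Lemma~\ref{L:monotone-lift} does not let you prescribe the lifted endpoint, so the error can be $2B$ rather than $B$), but this does not affect the bound $K=3B$.
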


\begin{proof}
The first inequality and the $1/2$-density of $p(X)$ follow from
Lemma~\ref{L:quotient-basic}.  Put $z=p(x)$ and $z'=p(y)$.  Choose geodesics
$\gamma_x,\gamma_y$ in $X$ from $v$ to $x,y$.  By
Lemma~\ref{L:quotient-basic}, their projections are geodesics from $\bar v$
to $z,z'$ in $Q$.

Choose $a,b$ on these projected geodesics by
Lemma~\ref{L:geodesic-decomposition}, and let $x_a\in\gamma_x$, $y_b\in\gamma_y$ be
the vertices which project to $a,b$.  Since height changes by one along the
geodesics,
\[
 d_X(x,x_a)=d_Q(z,a),
 \qquad
 d_X(y,y_b)=d_Q(z',b).
\]
If $a=b$, Lemma~\ref{L:fibres-new} gives
\[
 d_X(x,y)
 \leq d_Q(z,a)+B+d_Q(b,z')
 =d_Q(z,z')+B.
\]

Suppose that $a\neq b$.  By the decomposition, $a,b$ lie on one cycle.
Corollary~\ref{C:cycle-arc-lift} gives points
$a'\in p^{-1}(a)$ and $b'\in p^{-1}(b)$ such that
\[
        d_X(a',b')\leq d_Q(a,b)+B.
\]
Using Lemma~\ref{L:fibres-new} once in each of the fibres over $a$ and $b$,
\[
\begin{aligned}
 d_X(x,y)
 &\leq d_Q(z,a)+B+d_Q(a,b)+B+B+d_Q(b,z')\\
 &=d_Q(z,z')+3B.
\end{aligned}
\]
This proves the second inequality.  Since $K>1/2$, the density assertion also
shows that $p$ is a $(1,K)$-quasi-isometry.
\end{proof}

\begin{proof}[Proof of the Main Theorem]
By Lemma~\ref{L:graph-reduction}, $X_0$ is $(1,1)$-quasi-isometric to a graph
$X$ containing no $c$-fat theta curve, where $c$ depends only on $c_0$.
Theorem~\ref{T:metric-comparison} gives a $(1,K(c))$-quasi-isometry from $X$
to a cactus, and composing the two quasi-isometries proves the theorem.
\end{proof}

\end{document}